\newlist{primenumerate}{enumerate}{1}
\setlist[primenumerate,1]{label={\arabic*$'$}}
\DeclareSymbolFont{cyrletters}{OT2}{wncyr}{m}{n}
\DeclareMathSymbol{\Sha}{\mathalpha}{cyrletters}{"58}
\newcommand{\DK}[1]{\textcolor{purple}{#1}}
\newtheorem{theorem}{Theorem}[section]
\newtheorem{lemma}[theorem]{Lemma}
\newtheorem{conj}[theorem]{Conjecture}
\newtheorem{proposition}[theorem]{Proposition}
\newtheorem{corollary}[theorem]{Corollary}
\newtheorem{definition}[theorem]{Definition}
\newtheorem*{theorem*}{Theorem}
\newtheorem{assumption}[theorem]{Assumption}
\numberwithin{equation}{section}
\newtheorem{lthm}{Theorem}
\theoremstyle{remark}
\newtheorem{remark}[theorem]{Remark}
\newcommand{\SL}{\operatorname{SL}}
\newcommand{\Image}{\operatorname{Im}}
\newcommand{\Gal}{\operatorname{Gal}}
\newcommand{\Frob}{\operatorname{Frob}}
\newcommand{\cyc}{\operatorname{cyc}}
\newcommand{\Sel}{\operatorname{Sel}}
\newcommand{\rk}{\operatorname{rank}}
\newcommand{\an}{\operatorname{an}}
\newcommand{\alg}{\operatorname{alg}}
\newcommand{\Gr}{\operatorname{Gr}}
\newcommand{\SelBDP}{\operatorname{Sel}^{\textup{BDP}}}
\newcommand{\Z}{\mathbb{Z}}
\newcommand{\Zp}{\mathbb{Z}_p}
\newcommand{\Qp}{\mathbb{Q}_p}
\newcommand{\Q}{\mathbb{Q}}
\newcommand{\EC}{\mathsf{E}}
\newcommand{\cL}{\mathcal{L}}
\newcommand{\fp}{\mathfrak{p}}
\theoremstyle{plain} 
\newtheorem*{intr@thm}{\intr@thmname}
\newtheorem*{c@njecture}{\conjn@name}
\newcommand{\myl@bel}[2]{
 \protected@write \@auxout {}{\string \newlabel {#1}{{#2}{\thepage}{#2}{#1}{}} }
 \hypertarget{#1}{}
 } 
\newcommand{\mylabel}[2]{#2\def\@currentlabel{#2}\label{#1}}
\title{Iwasawa Theory of Elliptic Curves in Quadratic Twist Families}
\author[D.~Kundu]{Debanjana Kundu}
\address[Kundu]{University of Regina, Saskatchewan, Canada}
\email{debanjana.kundu@uregina.ca}
\author[K.~M\"uller]{Katharina M\"uller}
\address[Müller]{Institut für Theoretische Informatik, Mathematik und Operations Research, Universität der Bundeswehr München, Werner-Heisenberg-Weg 39, 85577 Neubiberg, Germany}
\email{katharina.mueller@unibw.de}
\date{\today}
\begin{document}

\begin{abstract}
In this article, we use two different approaches -- one algebraic and the other analytic -- to study the variation of Iwasawa invariants of rational elliptic curves in some quadratic twist families.
The analytic approach involves a thorough investigation of half-integral weight modular forms.
On the other hand, the algebraic proof requires studying the BDP-Selmer groups and the fine Selmer groups.
\end{abstract}

\keywords{quadratic twist, cyclotomic $\Zp$-extension}
\subjclass[2020]{Primary: 11R23, 11G05; Secondary: 11R11, 11R45}

\maketitle

\section{Introduction}

\subsubsection*{Arithmetic of elliptic curves and their quadratic twists}
Given an elliptic curve $\EC$ defined over a number field $K$, the Mordell--Weil theorem asserts that the group of $K$-rational points, called the Mordell--Weil group and denoted by $\EC(K)$, is finitely generated as an abelian group (see \cite{Mor22, Wei29}).
A central question in the arithmetic of elliptic curves is the precise structure of this group, in particular its (algebraic) rank.
A long-standing conjecture in the area is the \emph{rank distribution conjecture} which claims that over any number field a suitably-defined \emph{average rank} would be 1/2.
More precisely, half of all elliptic curves have Mordell--Weil rank 0 and the remaining half have (Mordell--Weil) rank 1; whereas higher (Mordell–Weil) ranks constitute 0\% of all elliptic curves, even though there may exist infinitely many such elliptic curves.
The aforementioned rank distribution conjecture originated in the work of D.~Goldfeld; see \cite{Gol79} where he predicted the rank distribution in quadratic twist families of elliptic curves.
More precisely, let $\EC: y^2 = f(x)$ be a rational elliptic curve (where $f(x)$ is a cubic polynomial) and for $d$ a square-free integer (positive or negative) set $\EC^{(d)}: dy^2 = f(x)$ to be its quadratic twist.
Then the Mordell--Weil rank of $\EC^{(d)}/\Q$ is 0 (resp. 1) for half (resp. half) of all square free $d$'s and for 0\% of all square-free $d$'s, the (Mordell--Weil) rank of $\EC^{(d)}/\Q$ is at least 2.
Note that $\EC$ and $\EC^{(d)}$ are not isomorphic over $\Q$ but become isomorphic over the quadratic field $K = \Q(\sqrt{d})$.
Sometimes, depending on the context, the twisted curve is denoted as $\EC^K$.

The best results towards the rank distribution conjecture are by M.~Bharagava and A.~Shankar (see \cite{BS15_quartic, BS15_cubic}); they show that the average rank of elliptic curves over $\Q$ is strictly less than one, and that both rank 0 and rank 1 cases comprise non-zero densities across all rational elliptic curves.
In \cite{Smi25}, A.~Smith has announced a proof of Goldfeld's Conjecture which is conditional on the Birch--Swinnerton-Dyer Conjecture.
Proving results about the Mordell--Weil rank almost always involves a thorough analysis of the \emph{Selmer group}.
This idea dates back to the proof of the Mordell--Weil theorem, and is even an essential part of contemporary results like that of Bhargava--Shankar and Smith.

\subsubsection*{Iwasawa theory of elliptic curves}
Another subject of active research is Iwasawa theory.
A motivating problem in the Iwasawa theory of elliptic curves is the question of determining the growth of the Mordell--Weil rank in certain infinite towers of number fields such as the cyclotomic $\Zp$-extension.
Such questions were first studied by B.~Mazur in \cite{Maz72}, where he showed that for a class of elliptic curves defined over $\Q$, the Mordell--Weil rank remains bounded in the cyclotomic $\Z_p$-extension $\Q_{\cyc}/\Q$.
Mazur's proof (unsurprisingly) involved a thorough analysis of the $p$-primary Selmer group of $\EC/\Q_{\cyc}$, which is denoted by $\Sel(\EC/\Q_{\cyc})$.
This result has been extended to all \emph{rational} elliptic curves by K.~Kato \cite{Kat04} and D.~Rohrlich \cite{Roh88}.

Let $\EC/\Q$ be an elliptic curve with good \emph{ordinary} reduction at $p\geq 5$.
Over $\Q_{\cyc}$, it is known that the $p$-primary Selmer group is cofinitely generated as a module over the Iwasawa algebra (denote by $\Lambda$) and is co-torsion.
The Iwasawa algebra is isomorphic to the power series ring $\Zp\llbracket x \rrbracket$.
The algebraic structure of the Selmer group (as a $\Lambda$-module) is encoded by Iwasawa invariants, $\mu$ and $\lambda$.
By the $p$-adic Weierstrass Preparation Theorem, the characteristic ideal of (the Pontryagin dual of) the Selmer group is generated by a unique element $f^{(p)}_{\Sel(\EC/\Q_{\cyc})^\vee} (x)$, which can be expressed as a power of $p$ times a distinguished polynomial.
It is well-known that  the $\lambda$-invariant of $\Sel(\EC/\Q_{\cyc})^\vee$ is at least as large as the Mordell--Weil rank of $\EC/\Q$.
Thus, the study of the $\lambda$-invariant attracts significant attention when trying to gain information about the rank of the elliptic curve.

\subsubsection*{Motivation}
The Euler characteristic for $\EC$ is an invariant of the elliptic curve which can be computed (up to a $p$-adic unit) by knowing well-understood arithmetic invariants of $\EC/\Q$ but it (miraculously) encodes information on the Iwasawa invariants of $\Sel(\EC/\Q_{\cyc})^\vee$.
Using the Euler characteristic for $\EC$ it has been possible to study the variation of Iwasawa invariants in certain families of elliptic curves, such as all rank 0 elliptic curves over $\Q$ with good reduction at $p$.
In view of these results, it is expected that for a fixed prime $p$, the proportion of elliptic curves of rank 0 over $\Q$ with good ordinary reduction at $p$ (ordered by height) with trivial $p$-primary Selmer group (equivalently $\mu=\lambda=0$) approaches 100\% as $p \to \infty$; (see \cite[Conjecture~4.7]{KR21} and for refinements see \cite{KR24}).
These expectations hinge on assuming finiteness of the $p$-primary part of the Shafarevich--Tate group and on Cohen--Lenstra type heuristics on the variation of the Shafarevich--Tate group group by C.~Delaunay \cite{Del01}.
[Were it easier to compute the $p$-adic regulator of elliptic curves, one might have more ambitiously predicted that the proportion of $\EC/\Q$ of rank $1$ with good ordinary reduction at $p$ (ordered by height) with $\mu=0$ and $\lambda=1$ approaches 100\% as $p \to \infty$.]
Using the Euler characteristic to study variation of Iwasawa invariants in families of elliptic curves has been moderately successful in some other cases; for example, changing the base field and/or the $\Zp$-extension (see \cite{KLR21_noncomm, HKR}).
However, it has been almost impossible to use this idea to study the variation of Iwasawa invariants of elliptic curves in \emph{thin families} such as quadratic twist families of a fixed elliptic curve $\EC/\Q$.

We use two (different) approaches -- one algebraic and the other analytic -- to study the variation of Iwasawa invariants of elliptic curves in some (quadratic) twist families.
These ideas are detailed below.
To the best knowledge of the authors, it is a first attempt to use the theory of modular forms of half-integral weights to study Iwasawa invariants of elliptic curves.

\subsection{Main Results}


\subsubsection*{Approach I: Analytic}

The first main result we prove shows that in certain quadratic twist families the Iwasawa $\lambda$-invariant remains constant.
More precisely, we prove the following result.

\begin{lthm}
\label{thm A}
Let $\EC/\Q$ be an elliptic curve of square-free odd conductor {$N_{\EC}$} and $f$ be the associated modular form.
Let $F\in S_{3/2}(4N_{{\EC}},1,f)$ be a modular form of weight $3/2$ corresponding to $f$ under the Shimura lift.
Let $n_1, n_2$ be square free integers coprime to $p$ such that $\frac{n_1}{n_2}\in (\Q_q^\times)^2$ for all $q\mid 4N_{\EC}p$.
Assume that $\chi_{-n_1}$ and $\chi_{-n_2}$ have the same sign.
Assume that $a_{n_2}(F)$ is a $p$-adic unit and $\lambda_{\an}(\EC^{(-n_1)})=\mu_{\an}(\EC^{(-n_1)})=0$.
Then the same is true for $\EC^{(-n_2)}$.
Further, if the Iwasawa Main conjecture holds then the assertion is true for the algebraic Iwasawa invariants.
\end{lthm}

Comment on notation: We remind the reader that here, $\EC^{(-n)}$ denotes the quadratic twist of $\EC/\Q$ by $\Q(\sqrt{-n})$ for positive and square-free $n$.
Also, the analytic Iwasawa invariants are the ones associated with the $p$-adic $L$-function of the elliptic curve; see Section~\ref{sec: p-adic L function defn} for definitions.

The way to interpret this result is as follows: fix a square-free positive integer $n_{1}$ coprime to $p$ such that the analytic Iwasawa invariants of the twisted elliptic curve $\EC^{(-n_1)}$ are trivial.
Vary $n_2$ over square-free positive integer coprime to $p$ such that $a_{n_2}(F_{\EC})$ is a $p$-adic unit, where $F=F_{\EC}$ as above and $\frac{n_1}{n_2}\in (\Q_q^*)^{2}$ for all $q\mid 2N_{\EC}p$, and $\chi_{-n_1}$ and $\chi_{-n_2}$ have the same sign.
Then, the twisted elliptic curve $\EC^{(-n_{2})}$ has the same Iwasawa-invariants as that of the (fixed) elliptic curve $\EC^{(-n_{1})}$.
In this case, we are not proving results on $\lambda$-invariants being equal to the original elliptic curve $\EC$.

\subsubsection*{Approach II: Algebraic}

Using techniques from Galois cohomology, results on fine Selmer groups, and explicit twisting of elliptic curves, we prove the following result.

\begin{lthm}
\label{thm B}
Let $\EC/\Q$ be an elliptic curve.
Let $p\geq 5$ be a prime and $K$ be a quadratic field satisfying the following properties
    \begin{enumerate}
    \item[\textup{(}i\textup{)}] $\EC(K_v)[p]=\{0\}$ for all $v\in \Sigma(K)$ where $\Sigma(K)$ denotes the set of all primes (in $K$) above the conductor of $\EC$, the primes above $p$, the archimedean prime(s), and the primes ramified in $K/\Q$,
    \item[\textup{(}ii\textup{)}] $\EC$ has good ordinary reduction at $p$,
    \item[\textup{(}iii\textup{)}] $p$ splits in $K$,
    \item[\textup{(}iv\textup{)}] the $p$-primary part of the fine Selmer group of $\EC/K$ is trivial,
    \item[\textup{(}v\textup{)}] $\Sel(\EC/\Q)$ and $\Sel(\EC^K/\Q)$ are both non-trivial.
\end{enumerate}
Then there is an isomorphism
\[
\Sel(\EC/\Q_{\cyc})^\vee\cong \Sel(\EC^K/\Q_{\cyc})^\vee
\]
as $\Lambda$-modules.
In particular, $\lambda_{\alg}(\EC/\Q_{\cyc}) = \lambda_{\alg}(\EC^{K}/\Q_{\cyc})$.
\end{lthm}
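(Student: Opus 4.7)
The plan is to exploit the fact that $\EC$ and $\EC^K$ become isomorphic over $K$, and to descend the resulting identification from $K_{\cyc}$ back to $\Q_{\cyc}$ using the $\Gal(K/\Q)$-action. Let $\sigma$ denote the non-trivial element of $\Gal(K/\Q)$ and let $\chi$ be the quadratic character cutting out $K$. The isomorphism $\EC \cong \EC^K$ over $K$ identifies $\Sel(\EC/K_{\cyc})$ with $\Sel(\EC^K/K_{\cyc})$ as $\Lambda$-modules, but the identification intertwines the two natural $\sigma$-actions via $\chi$; consequently, the $(+1)$-eigenspace on one side corresponds to the $(-1)$-eigenspace on the other.

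The first step would be a descent argument using inflation--restriction applied to $K_{\cyc}/\Q_{\cyc}$. Hypothesis (i) (the vanishing of $\EC(K_v)[p]$ at every place $v \in \Sigma(K)$) is precisely designed to kill the local error terms $H^i(\Gal(K_{\cyc,w}/\Q_{\cyc,v}), \EC(K_{\cyc,w})[p^\infty])$ appearing in the comparison of local and global Selmer conditions, by a standard argument. Thus restriction identifies $\Sel(\EC/\Q_{\cyc})$ with $\Sel(\EC/K_{\cyc})^{\sigma=1}$ and, via the twisting isomorphism, $\Sel(\EC^K/\Q_{\cyc})$ with $\Sel(\EC/K_{\cyc})^{\sigma=-1}$. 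Since $p$ is odd, the $\sigma$-action splits the Pontryagin dual cleanly, yielding a decomposition
\begin{equation*}
\Sel(\EC/K_{\cyc})^\vee \;=\; \Sel(\EC/\Q_{\cyc})^\vee \oplus \Sel(\EC^K/\Q_{\cyc})^\vee
\end{equation*}
as $\Lambda$-modules. It now suffices to prove that the two eigenspaces are isomorphic.

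For the second, harder step, I would use hypotheses (ii), (iii), and (iv) to analyze the structure of $\Sel(\EC/K_{\cyc})^\vee$. Since $p$ splits in $K$ as $\fp\bar{\fp}$ and $\sigma$ interchanges these two primes, the local Greenberg conditions at the primes above $p$ pair up symmetrically under the $\sigma$-action; the natural target module $J_\fp \oplus J_{\bar\fp}$ carries a $\sigma$-action that swaps the two factors, so its $(+1)$- and $(-1)$-eigenspaces are automatically isomorphic. Triviality of the fine Selmer group over $K$ (hypothesis (iv)), combined with a control theorem for the fine Selmer group applicable under good ordinary reduction (hypothesis (ii)), forces the fine Selmer group over $K_{\cyc}$ to be trivial, so the full Selmer module $\Sel(\EC/K_{\cyc})^\vee$ is pinned down by these symmetric local contributions at $p$. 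The rank $\leq 1$ hypothesis is then used to ensure that any global Mordell--Weil contribution detected in Selmer also splits evenly between the two eigenspaces.

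The main obstacle is this last comparison: making precise the structural claim that, once the fine Selmer group vanishes, the full Selmer module is controlled (as a $\Lambda$-module, not merely up to pseudo-isomorphism) by the $\sigma$-symmetric local data at the split primes above $p$. This requires showing that the injection $\Sel(\EC/K_{\cyc}) \hookrightarrow J_\fp \oplus J_{\bar\fp}$ (or its dual surjection) is itself $\sigma$-equivariant in a way that transports the symmetry of the target faithfully to the source, without introducing asymmetric error from the rank~$\leq 1$ global points. I expect this is where hypotheses (ii)--(iv) must be used in a combined and delicate manner, and where the restriction to low Mordell--Weil rank enters essentially.
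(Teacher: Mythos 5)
Your step 1 (the eigenspace decomposition) is fine, but it is essentially formal: since $p$ is odd and $[K:\Q]=2$, all the inflation--restriction error terms are $p$-primary groups killed by $2$, so $\Sel(\EC/K_{\cyc})^\vee\cong \Sel(\EC/\Q_{\cyc})^\vee\oplus\Sel(\EC^K/\Q_{\cyc})^\vee$ holds without hypothesis (i). The actual content of the theorem is your step 2 -- that the two eigenspaces are isomorphic as $\Lambda$-modules -- and there your argument is only a heuristic, which you yourself flag as the ``main obstacle.'' Saying that the local conditions at the two primes above $p$ are swapped by $\sigma$ and that the fine Selmer group vanishes does not pin down the $G$-module structure of the Selmer group: the Selmer group is cut out inside the global $H^1$ by those local conditions, and the symmetry of the target of the localization map says nothing about its source or image unless you control the ambient module and the surjectivity of the localization map. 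That is exactly what the paper supplies and what is missing here: using (i) and (iv), one shows (via the identification $\Sel^0(\EC/K)[p^k]\cong\Sel^0(\EC[p^k]/K)$, local and global Tate duality, and the Euler characteristic formula) that $H^2(G_\Sigma(K),\EC[p^k])=0$ and $H^1(G_\Sigma(K),\EC[p^\infty])$ is divisible of corank $2$, whence $H^1(G_\Sigma(K_{\cyc}),\EC[p^\infty])^\vee\cong\Lambda[G]$ is \emph{free of rank one over} $\Lambda[G]$, i.e.\ induced from $\Q_{\cyc}$. Then, dualizing the defining sequence of the Selmer group (the localization map is surjective because the dual Selmer group is $\Lambda$-torsion and the global $H^2$ vanishes, by Greenberg--Vatsal), noting that the local terms away from $p$ die over $K_{\cyc}$ and that the local terms at $p$ are induced because $p$ splits (iii), one gets $\Sel(\EC/K_{\cyc})^\vee\cong\Sel(\EC/\Q_{\cyc})^\vee\otimes\Zp[G]$ as $\Lambda[G]$-modules. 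The induced-module structure is what forces the two eigenspaces to coincide; without it your ``symmetry'' argument has no purchase.

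Two further misattributions of the hypotheses. Hypothesis (i) is not needed to kill descent error terms for $K_{\cyc}/\Q_{\cyc}$ (odd $p$ does that); its real role is to kill $\EC(K_v)[p]$ and hence, via duality, the local $H^2$'s and the term $H^1(\Gamma,\EC(K_{\cyc})[p^\infty])$, which is what the freeness computation needs. And the rank-at-most-one hypothesis is not used to make Mordell--Weil contributions ``split evenly between the eigenspaces''; it is a consequence of (iv) (by Wuthrich's results on fine Selmer groups) and plays no separate role in the proof of the isomorphism -- so the place where you expect the rank restriction to ``enter essentially'' does not exist, and building the argument around it would lead you astray.
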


The proof of Theorem~\ref{thm B} is an assertion on the classical Selmer groups.
However, the new key idea of our proof is to invoke the study of the BDP Selmer group to get the desired information.
Our proof also requires a thorough analysis of the fine Selmer group (which is a subgroup of the classical Selmer group).
In addition to the aforementioned theorem, under additional hypotheses on finiteness of the $p$-primary part of the Shafarevich--Tate group, we also prove using results of J.~Lee that for all $n$
\[
\rk_{\Z}(\EC(\Q_{(n)}))=\rk_{\Z}(\EC^K(\Q_{(n)})).
\]
In Theorem~\ref{thm 4.11} we discuss sufficient conditions to verify the hypothesis.
Finally,  we compute for `how many' quadratic fields the above conditions hold (when $\EC/\Q$ and $p\geq 5$ are kept fixed); see Theorem~\ref{density result}.
Unfortunately, this is a `thin set' among all quadratic fields.
However, if we restrict the count to quadratic fields $K = \Q(\sqrt{\pm \ell})$ with $\ell$ a prime and then consider (appropriate) density, we get positive proportions that depend on the number of prime divisors and parity of $N_{\EC}$.
Recall that a conjecture of J.~Silverman predicts that there are infinitely many primes $L$ such that $\EC^K$ has rank zero (resp. 1) if $K$ varies over the fields $K=\Q(\sqrt{\ell})$ for primes $\ell$.
Thus, if $\EC/\Q$ is of rank $1$ we would expect that condition (v) is satisfied for a positive density of twist.

We emphasize that our main results do not require any hypothesis on the finiteness of the Shafarevich--Tate group over $\Q$ or $K$.

\subsection{Organization}
Including this introduction, the article has five sections.
Section~\ref{sec: preliminary} is preliminary in nature, where we recall basic definitions from Iwasawa theory and the theory elliptic curves.
We remind the reader of the Greenberg--Iwasawa Main Conjecture (IMC) -- which is known to be a theorem in many cases.
We also discuss the basic notions associated with modular forms of half-integral weight.
In Section~\ref{sec: approach 1} we prove results with an analytic flavour.
We can show that certain quadratic twist families of elliptic curves have constant (analytic) Iwasawa invariants over $\Q_{\cyc}$.
Using IMC, this result \DK{is} converted to a statement on algebraic Iwasawa invariants.
In Section~\ref{sec: approach 2} we prove results which are algebraic in nature.
For a fixed elliptic curve $\EC/\Q$ we find quadratic twist families $\EC^{(n)}$ such that the (dual) Selmer groups of $\EC$ and $\EC^{(n)}$ are isomorphic as $\Lambda$-modules over $\Q_{\cyc}$.
In particular, the Iwasawa invariants in such quadratic twist families remain the same.
Under appropriate assumptions, we can also comment on the equality of Mordell--Weil ranks of $\EC$ and its twist in each layer of the cyclotomic $\Zp$-extension.
In Section~\ref{sec: density} we prove density results. 

\subsection{Outlook}
For the analytic approach, we have restricted ourselves to the study of the variation of Iwasawa invariants associated to elliptic curves in \emph{imaginary quadratic} twist families.
One possibility is to consider real quadratic twist families, as well.
The analytic approach builds on a result of Waldspurger relating $L$-values of modular forms and imaginary quadratic twists \cite{waldspurger}.
To the knowledge of the authors no analogue for real quadratic twists is known.
Throughout this article, we have considered the case where $p$ is a prime of good \emph{ordinary} reduction for $\EC/\Q$.
We intend to study the case when $p$ is a prime of good \emph{supersingular} reduction in the future.
Finally, it seems well within reach that at least some of our results can be extended to the case of modular abelian varieties.

\section*{Acknowledgements}
We thank Antonio Lei for his questions during a talk that DK gave about this work at the University of Ottawa.
We thank Jeffrey Hatley, Sören Kleine, and Ahmed Matar for their comments on an earlier draft of this paper.
DK acknowledges the support of an AMS--Simons Early Career Travel grant.
This project was completed while in residence at Simons Laufer Mathematical Sciences Institute (formerly MSRI) in Berkeley, California as part of the Summer Research in Mathematics (2025).
We thank SLMath for its hospitality and financial support.
This work was supported by the NSF grant DMS-1928930.

\section{Preliminaries}

\label{sec: preliminary}

\subsection{Iwasawa theory}

Fix an odd prime $p$.
For a non-negative integer $n$, write $\zeta_{p^n}$ to denote a primitive $p^n$-th root of unity.
Then the \emph{cyclotomic} $\Zp$-extension of $\Q$ is the maximal totally real pro-$p$ subfield of $\bigcup_n \Q(\zeta_{p^n})$.
We denote this extension by $\Q_{\cyc}$.
In particular, there is a tower
\[
\Q = \Q_{(0)} \subset \Q_{(1)} \subset \ldots \subset \Q_{\cyc}
\]
where each $\Q_{(n)}$ is the totally real subfield of $\Q(\zeta_{p^{n+1}})$ with $\Gal(\Q_{(n)}/\Q)\simeq \Z/p^n\Z$.
By infinite Galois theory,
\[
\Gamma = \Gal(\Q_{\cyc}/\Q) \simeq \varprojlim_n \Z/p^n\Z = \Zp.
\]
Let $K$ be any number field.
Then $K_{\cyc} = K \cdot \Q_{\cyc}$, i.e., the compositum of $K$ with $\Q_{\cyc}$.
As before, we write $K_{(n)}$ to denote the $n$-th layer of $K_{\cyc}$ and write $\Gamma_n = \Gal(K_{(n)}/K) \simeq \Z/p^n \Z$.
By definition $\Gal(K_{\cyc}/K)\simeq \Zp$, as well.

The \emph{Iwasawa algebra} $\Lambda=\Lambda(\Gamma)$ is the completed group algebra $\Z_p\llbracket \Gamma \rrbracket :=\varprojlim_n \Z_p[\Gamma/\Gamma^{p^n}]$.
Fix a topological generator $\gamma$ of $\Gamma$; this gives an isomorphism of rings 
\begin{align*}
\Lambda &\xrightarrow{\sim} \Z_p\llbracket x\rrbracket \\
\gamma & \mapsto 1+x.
\end{align*}

Let $M$ be a finitely generated torsion $\Lambda$-module.
The \emph{Structure Theorem of $\Lambda$-modules} asserts \cite[Theorem~13.12]{Was97} that $M$ is pseudo-isomorphic to a finite direct sum of cyclic $\Lambda$-modules.
In other words, there is a homomorphism of $\Lambda$-modules
\[
M \longrightarrow \left(\bigoplus_{i=1}^s \Lambda/(p^{m_i})\right)\oplus \left(\bigoplus_{j=1}^t \Lambda/(f_j(x)) \right)
\]
with finite kernel and cokernel.
Here, $m_i>0$ and $f_j(x)$ is a distinguished polynomial (i.e. a monic polynomial with non-leading coefficients divisible by $p$).
The characteristic ideal of $M$ is (up to a unit) generated by the characteristic element,
\[
f_{M}^{(p)}(x) := p^{\sum_{i} m_i} \prod_j f_j(x).
\]
The $\mu$-invariant of $M$ is defined as the power of $p$ in $f_{M}^{(p)}(x)$.
More explicitly,
\[
\mu_{\alg}(M) = \mu_p(M):=\begin{cases}0 & \textrm{ if } s=0\\
\sum_{i=1}^s m_i & \textrm{ if } s>0.
\end{cases}
\]
The $\lambda$-invariant of $M$ is the degree of the characteristic element, i.e.
\[
\lambda_{\alg}(M) = \lambda_p(M) := \sum_{j=1}^t \deg f_j(x).
\]

\subsection{Selmer group and fine Selmer group of elliptic curves}
Fix an algebraic closure $\overline{\Q}$ of $\Q$.
Then an algebraic extension of $\Q$ is a subfield of this fixed algebraic closure, $\overline{\Q}$.
Throughout, $p$ denotes a fixed rational odd prime.
Let $\EC/\Q$ be an elliptic curve.
Fix a finite set $\Sigma = \Sigma(\Q)$ of primes of $\Q$ \emph{containing} $p$ and the primes of bad reduction of $\EC$.
Denote by $\Q_{\Sigma}$, the maximal algebraic extension of $\Q$ unramified outside $\Sigma$.
For every (possibly infinite) extension $L$ of $\Q$ contained in $\Q_{\Sigma}$, write $G_{\Sigma}\left({L}\right) = \Gal\left(\Q_{\Sigma}/{L}\right)$.
Write $\Sigma(L)$ for the set of primes of $L$ above $\Sigma$.
If $L$ is a finite extension of $\Q$ and $w$ is a place of $L$, we write $L_w$ for its completion at $w$; when $L/\Q$ is infinite, it is the union of completions of all finite sub-extensions of $L$.

\begin{definition}
Fix an elliptic curve $\EC/\Q$ with good reduction at $p$.
View $\EC[p^\infty]$ as a $\Zp$-module equipped with a continuous $G_{\Sigma}(\Q)$-action.
\begin{itemize}
 \item[\textup{(}i\textup{)}] For any finite extension $L/\Q$ set
\begin{align*}
K^1_v\left(\EC/L\right) &= \bigoplus_{w|v} H^1\left(L_w, \EC[p^\infty]\right),\\
J^1_v\left(\EC/L\right) &= \bigoplus_{w|v} H^1\left(L_w, \EC\right)[p^\infty],
\end{align*}
where the direct sum is taken over all primes $w$ of $L$ lying above $v$.
\item[\textup{(}ii\textup{)}]For an infinite algebraic extension $\cL/\Q$, define $K_v^1\left(\EC/\cL\right)$ by taking the inductive limit of $K_v^1\left(\EC/L\right)$ over all finite extensions $L/\Q$ contained in $\cL$, and similarly for $J_v^1(\EC/\cL)$.
\item[\textup{(}iii\textup{)}]Let $L$ be an algebraic extension of $\Q$ (possibly infinite) that is contained inside $\Q_\Sigma$, define the \emph{$p^\infty$-fine Selmer group} of $\EC$ over $L$ as
\[
\Sel^0(\EC/L):=\ker\left(H^1(G_{\Sigma}(L),\EC[p^\infty])\longrightarrow\bigoplus_{v\in \Sigma(L)} K^1_v(\EC/L)\right).
\]
and the \emph{$p^\infty$-Selmer group} as
\[
\Sel(\EC/L):=\ker\left(H^1(G_{\Sigma}(L),\EC[p^\infty])\longrightarrow\bigoplus_{v\in \Sigma(L)} J^1_v(\EC/L)\right).
\]
\end{itemize}
\end{definition}

We will be interested in Selmer groups and fine Selmer groups over the cyclotomic $\Zp$-extension of a number field.
Given a number field $K$ and an elliptic curve $\EC/K$ be an elliptic curve, we view the Selmer group over $K_{\cyc}$ as the direct limit of the Selmer groups over the number fields $K_{(n)}$ in $K_{\cyc}$, i.e.
\[
\Sel(\EC/K_{\cyc})=\varinjlim_n \Sel(\EC/K_{(n)}) = \ker\left(H^1(G_{\Sigma}(K_{\cyc}),\EC[p^\infty])\longrightarrow\bigoplus_{v\in \Sigma(K_{\cyc})} J^1_v(\EC/L)\right),
\]
and similarly for fine Selmer groups.
Since all primes are finitely decomposed in the cyclotomic $\Zp$-extension, we henceforth simplify notation and write $\bigoplus_{v\in {\Sigma}(K_{\cyc})} H^1\left(K_{\cyc,v},\EC[p^\infty]\right)$ in place of $\bigoplus_{v\in {\Sigma}(K_{\cyc})} K^1_v(\EC/K_{\cyc})$, and similarly for $\bigoplus_{v\in {\Sigma}(K_{\cyc})} J^1_v(\EC/K_{\cyc})$.
Since $p$ is fixed, we ignore including it in the notation of the (fine) Selmer group.
We remind the reader that both the definitions are independent of ${\Sigma}$ as long as it \emph{contains} the primes above $p$, the primes of bad reduction of $\EC$, and the archimedean primes.

Let $L/\Q$ be an algebraic extension.
The Selmer group sits inside the following short exact sequence
\[
0 \longrightarrow \EC(L) \otimes \Qp/\Zp \longrightarrow \Sel(\EC/L) \longrightarrow \Sha(\EC/L)[p^\infty] \longrightarrow 0.
\]
The left-most object is called the \emph{Mordell--Weil group} of $\EC$ and the right-most object is called the \emph{Shafarevich--Tate group}.
The relationship between the $p^\infty$-fine Selmer group and the $p^\infty$-Selmer group is made clear through the following exact sequence
\[
0 \longrightarrow \Sel^0(\EC/L) \longrightarrow \Sel(\EC/L) \longrightarrow \bigoplus_{v\mid p} \EC(L_v) \otimes \Qp/\Zp.
\]

The Pontryagin dual of $\Sel(\EC/K_{\cyc})$ denoted by $\Sel(\EC/K_{\cyc})^\vee$ is a finitely generated $\Lambda$-module.
When $p$ is a prime of good \textit{ordinary} reduction and $K/\Q$ is abelian, it is known that $\Sel(\EC/K_{\cyc})$ is in fact, $\Lambda$-cotorsion; see \cite[Theorem~17.4]{Kat04}.
We can therefore attach Iwasawa invariants to $\Sel(\EC/K_{\cyc})^\vee$.
Since $\Sel^0(\EC/K_{\cyc})^\vee$ is a quotient of $\Sel(\EC/K_{\cyc})^\vee$, the same assertions hold for the (dual) fine Selmer group too.

For the purposes of our proofs, we are also interested in the \emph{$p^k$-fine Selmer group} for $k\geq 1$ which is defined for algebraic extensions $L/\Q$ as follows
\[
\Sel^0_{\Sigma}(\EC[p^k]/L) = \ker\left( H^1(G_{\Sigma}(L), \EC[p^k]) \longrightarrow \bigoplus_{v\in \Sigma(L)} H^1(L_v, \EC[p^k]) \right).
\]
As indicated in the notation, this \emph{may depend} on the choice of $\Sigma$.
However, if $L$ contains the cyclotomic $\Zp$-extension (as will be in our case) then the choice of $\Sigma$ does not matter (as long as $\Sigma$ contains $p$, the primes of bad reduction of $\EC$, and the archimedean primes).

\subsection{\texorpdfstring{$p$-adic $L$-functions}{}}
\label{sec: p-adic L function defn}
Let $\EC/\Q$ be an elliptic curve.
Recall that $\EC$ is modular, i.e there exists a modular form $f_{\EC}$ in $S_2(\Gamma_0(N_{\EC}))$ having the same Fourier-coefficients as $\EC$.
Let $\psi'$ be a finite order Dirichlet character and set $f_{\EC,\psi'}$ to denote the twist of $f_{\EC}$ by $\psi'$.

Suppose that $\psi$ is a non-trivial character on $\Gal(\Q_{\cyc}/\Q)$ of order $p^t$, and $\psi'$ is the induced Dirichlet character.
Assume that $\EC$ has good ordinary reduction at $p$ and let $\mathcal{L}(f_{\EC})\in \Lambda$ be the element defined by B.~Mazur, J.~Tate, and L.~Teitelbaum in \cite[Section 13]{MTT}.

Then we have the following interpolation property \cite[Section 14]{MTT}
\[
\psi(\mathcal{L}(f_{\EC}))=\alpha_p^{-t-1}\frac{p^{t+1}}{G({\psi'}^{-1})}L(f_{\EC, {\psi'}^{-1}},1)/\Omega_{f_{\DK{\EC}}},
\]
where $\alpha_p$ is the unit root of $X^2-a_p(f_{\EC})X+p$ in $\Qp$, $G(\psi'^{-1})$ is the Gauss--sum and $\Omega_{f_{\DK{\EC}}}$ is (real) the period associated to $f_{\DK{\EC}}$.

\subsection{Iwasawa Main Conjecture}
The Iwasawa main conjecture relates the analytic object $\mathcal{L}(f_{\EC})$ to the algebraic object $f^{(p)}_{\Sel(\EC/\Q_{\cyc})^\vee}$.
More precisely:

\begin{conj}[Greenberg--Iwasawa Main conjecture]
Let $\EC/\Q$ be an elliptic curve with good ordinary reduction at $p$.
Then there exists the following equality of ideals in $\Lambda$
\[
\langle \mathcal{L}(f_{\EC}) \rangle = \langle f^{(p)}_{\Sel(\EC/\Q_{\cyc})^\vee} \rangle.
\]
\end{conj}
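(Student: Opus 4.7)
The plan is to attack the conjecture by establishing the two divisibilities $\langle \mathcal{L}(f_{\EC}) \rangle \subseteq \langle f^{(p)}_{\Sel(\EC/\Q_{\cyc})^\vee} \rangle$ and $\langle f^{(p)}_{\Sel(\EC/\Q_{\cyc})^\vee} \rangle \subseteq \langle \mathcal{L}(f_{\EC}) \rangle$ separately, following the well-known strategy due to Kato on the one side and Skinner--Urban on the other. As reductions, I would first pass to the Galois representation $V_p\EC$ and reinterpret $\Sel(\EC/\Q_{\cyc})^\vee$ via Greenberg's local conditions at $p$ (using the ordinary filtration) and at primes of bad reduction, so that everything lives inside a framework of Iwasawa cohomology. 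I would also assume the residual representation $\EC[p]$ is irreducible as a $G_\Q$-module (the Eisenstein case requires extra care) and, for Skinner--Urban, the usual ramification/distinguishedness hypotheses.

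For the first divisibility, I would build Kato's Euler system of Beilinson--Kato zeta elements ${}_{c,d}z_m \in H^1_{\mathrm{\acute{e}t}}(Y_1(N)_{\Z[1/mp]}, \Z_p(2))$ and, by pushing forward along the modular parametrization $X_0(N_{\EC}) \to \EC$, obtain a compatible system of classes in the Iwasawa cohomology of $T_p\EC$ over $\Q_{\cyc}$. The explicit reciprocity law of Kato identifies the image of these classes under the dual exponential at $p$ with (twisted) critical $L$-values $L(f_{\EC,\chi}, 1)$, which by the interpolation property recalled in Section~\ref{sec: p-adic L function defn} package into $\mathcal{L}(f_{\EC})$. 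A standard Euler-system argument (Rubin--Kolyvagin machine adapted to the Iwasawa setting) then yields the divisibility $f^{(p)}_{\Sel(\EC/\Q_{\cyc})^\vee} \mid \mathcal{L}(f_{\EC})$.

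For the reverse divisibility, I would run the Eisenstein congruence argument of Skinner--Urban on the unitary group $\mathrm{GU}(2,2)$ attached to an imaginary quadratic field $K$ in which $p$ splits. The idea is to construct a Klingen--Eisenstein series whose constant term involves $\mathcal{L}(f_{\EC})$, exhibit cuspidal Hida families congruent to it modulo a prescribed power of $\mathcal{L}(f_{\EC})$, and then use the associated Galois representations together with a lattice argument à la Ribet to produce unramified-outside-$\Sigma$ extensions. These extensions inject into (the dual of) $\Sel(\EC/\Q_{\cyc})$ and force $\mathcal{L}(f_{\EC}) \mid f^{(p)}_{\Sel(\EC/\Q_{\cyc})^\vee}$. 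Finally, I would invoke a descent/specialization argument to upgrade from the anticyclotomic setting over $K$ to the cyclotomic setting over $\Q$.

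The main obstacle is unambiguously the second divisibility: Kato's side is by now a flexible and general toolkit, whereas the Eisenstein congruence construction is technically delicate, requires non-trivial hypotheses on the residual representation (irreducibility, ramification at some auxiliary prime, $p$-distinguishedness), and depends on controlling the vanishing locus of an auxiliary $p$-adic $L$-function so that the congruence is genuinely captured by $\mathcal{L}(f_{\EC})$ and not lost in an invertible factor. A secondary difficulty is verifying that the various $p$-adic $L$-functions appearing on the automorphic side (Klingen--Eisenstein, Hida family, cyclotomic) can all be matched, up to a unit in $\Lambda$, with the Mazur--Tate--Teitelbaum $\mathcal{L}(f_{\EC})$ used in the statement.
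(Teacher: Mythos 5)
The statement you are trying to prove is stated in the paper as a \emph{conjecture}, not a theorem: the paper offers no proof of it, and only records (in Theorem~\ref{Iwasawa main}, quoting Skinner--Urban and Rubin) the cases in which it is currently known. Your proposal is an accurate outline of the standard two-divisibility strategy (Kato's Euler system for one inclusion, the Skinner--Urban Eisenstein congruence on $\mathrm{GU}(2,2)$ for the other), but as written it does not prove the statement, and cannot: the conjecture as stated imposes no hypothesis beyond good ordinary reduction at $p$, whereas your argument silently imports exactly the extra hypotheses under which the known results operate --- irreducibility of $\EC[p]$ as a $G_\Q$-representation, existence of a prime $q\,\|\,N_{\EC}$ at which the residual representation is ramified, $p$-distinguishedness --- and these can genuinely fail (e.g.\ for Eisenstein primes). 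Your own write-up flags these as assumptions, which means the general statement is out of reach by this route.

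There is a second, more subtle gap even under those hypotheses: the combination of Kato's divisibility and the Skinner--Urban divisibility yields equality of characteristic ideals only in $\Lambda\otimes\Qp$, i.e.\ up to powers of $p$ (equivalently, up to the $\mu$-invariants), which is precisely how the paper states its Theorem~\ref{Iwasawa main}; the integral equality of ideals in $\Lambda$ demanded by the conjecture requires in addition a comparison of $\mu$-invariants (or an appeal to a conjecture of Greenberg that $\mu=0$), and is known unconditionally essentially only in the CM case via Rubin. So the missing ingredients are (i) removing the residual hypotheses and (ii) controlling the $p$-power (i.e.\ $\mu$) discrepancy; neither is addressed by the proposed plan, and both are open in general --- which is why the paper treats this as a conjecture and only \emph{uses} the cases listed in Theorem~\ref{Iwasawa main}.
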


The Greenberg--Iwasawa Main conjecture is known in the following cases in view of the work by C.~Skinner--E.~Urban \cite[Theorem 1]{skinner-urban} and K.~Rubin \cite[Theorem 12.3]{rubin}.

\begin{theorem}
\label{Iwasawa main}
Let $\EC/\Q$ be an elliptic curve of conductor $N_{\EC}$.
Let $p$ be an odd prime of good ordinary reduction.
Assume that either of the following conditions holds
\begin{enumerate}
    \item[\textup{(}1\textup{)}] $\EC$ has complex multiplication by an imaginary quadratic field.
    \item[\textup{(}2\textup{)}] $ \EC$ satisfies the following conditions:
        \begin{enumerate}
            \item[\textup{(}a\textup{)}] the natural representation $\rho$ of $G_\Q$ on $\EC[p]$ is irreducible.
            \item[\textup{(}b\textup{)}] there exists a prime $q\mid N_{\EC}$, $q^2\nmid N_{\EC}$ such that $\rho$ is ramified at $q$.
        \end{enumerate}
    \end{enumerate}
Then the Greenberg--Iwasawa Main conjecture holds in $\Lambda\otimes \Q_p$, i.e., there exist non-negative integers $a$ and $b$ such that 
\[
\langle p^a \mathcal{L}(f_{\EC}) \rangle =\langle p^bf_{\Sel(\EC/\Q_{\cyc})^\vee}^{(p)} \rangle.
\]
In particular, $\lambda(\mathcal{L}(f_\EC))=\lambda_{\alg}(\Sel(\EC/\Q_{\cyc})^\vee)$.
If Condition~\textup{(}1\textup{)} is satisfied, then the Greenberg--Iwasawa Main conjecture is true in $\Lambda$.
\end{theorem}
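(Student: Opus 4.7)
The statement combines two deep theorems whose proofs go in opposite directions of divisibility. The general strategy I would follow is to establish the two divisibilities
\[
\langle \mathcal{L}(f_{\EC}) \rangle \subseteq \langle f^{(p)}_{\Sel(\EC/\Q_{\cyc})^\vee} \rangle \quad \text{and} \quad \langle f^{(p)}_{\Sel(\EC/\Q_{\cyc})^\vee} \rangle \subseteq \langle \mathcal{L}(f_{\EC}) \rangle
\]
separately, each in $\Lambda$ or in $\Lambda \otimes \Q_p$ as available. The first divisibility is the one provided uniformly by Kato's Euler system in \cite{Kat04}: via the explicit reciprocity law, the image of Kato's zeta element in the Iwasawa cohomology recovers $\mathcal{L}(f_{\EC})$ up to a unit, and the Euler system machinery converts this into the bound that the characteristic ideal of the dual Selmer group divides $\mathcal{L}(f_{\EC})$. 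This step is actually clean in $\Lambda \otimes \Q_p$ under the irreducibility hypothesis (a), which is precisely what is needed to run the Euler system argument.

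For the CM case (1), I would invoke Rubin's two-variable Iwasawa main conjecture for imaginary quadratic fields. The plan is to use the elliptic units Euler system to produce the reverse divisibility for the Katz two-variable $p$-adic $L$-function over the $\Zp^2$-extension of the CM field $K$, then restrict along the anticyclotomic-to-cyclotomic specialization and invoke Yager's interpolation to identify the restriction with $\mathcal{L}(f_{\EC})$ (using that $L$-functions of CM forms decompose as products of Hecke $L$-functions). Here the key point is that the elliptic units Euler system is sharp enough to give the equality in $\Lambda$ itself, not merely up to $p$-powers, which is why conclusion (1) refines conclusion (2).

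For the non-CM case under hypotheses (2a) and (2b), I would follow the Skinner--Urban strategy via Eisenstein congruences on the quasi-split unitary group $U(2,2)$. The plan: first, construct a $p$-adic family of Klingen--Eisenstein series on $U(2,2)$ whose constant term incorporates $\mathcal{L}(f_{\EC})$ (and suitable auxiliary $L$-values). Next, show that the family is congruent modulo $\mathcal{L}(f_{\EC})$ to a family of genuine cuspidal eigenforms; the ramification condition (2b) is used precisely to force the associated Galois representations to be suitably ramified so that the lattice construction produces nontrivial extensions, while (2a) ensures residual irreducibility so that one can pass to a rank-one lattice quotient. The resulting Galois cohomology classes then cut out elements of the dual Selmer group, yielding $\mathcal{L}(f_{\EC}) \mid f^{(p)}_{\Sel(\EC/\Q_{\cyc})^\vee}$ in $\Lambda \otimes \Q_p$.

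The main obstacle — and the reason for the $\Lambda \otimes \Q_p$ caveat in case (2) — is controlling the $\mu$-invariants on both sides. Kato's bound and the Eisenstein construction each only control characteristic ideals away from powers of $p$, so one obtains equality of $\lambda$-invariants but must allow the $p^a, p^b$ factors in the statement. In the CM case, Rubin's argument handles the $\mu$-part because Gillard and Schneps proved $\mu = 0$ for the relevant Katz $L$-functions (for $p$ split in $K$), allowing the conclusion to be upgraded to an honest equality of ideals in $\Lambda$. Assembling these inputs is exactly what produces the theorem as stated, and in particular the equality of analytic and algebraic $\lambda$-invariants used throughout the sequel.
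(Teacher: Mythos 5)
The paper offers no proof of this statement---it is quoted verbatim from \cite[Theorem~1]{skinner-urban} and \cite[Theorem~12.3]{rubin} (with Kato's divisibility from \cite{Kat04} as the standard complementary input)---and your outline is an accurate summary of exactly how those cited results are proved: Kato's Euler system for $f^{(p)}_{\Sel(\EC/\Q_{\cyc})^\vee}\mid \mathcal{L}(f_\EC)$, Skinner--Urban's Klingen--Eisenstein congruences on $U(2,2)$ for the reverse divisibility in $\Lambda\otimes\Qp$ under (2a)--(2b), and Rubin's elliptic-unit Euler system in the CM case. One minor attribution quibble, immaterial here: the integral equality in $\Lambda$ in the CM case comes from Rubin's two-variable Euler-system argument together with the Iwasawa-theoretic class number formula, rather than being contingent on the Gillard--Schneps $\mu=0$ theorems for the Katz $p$-adic $L$-function.
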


In what follows we refer to the $\lambda(\mathcal{L}(f_\EC))$ as the \emph{analytic} $\lambda$-invariant and denote it by $\lambda_{\an}(\EC)$.

\subsection{Cusp forms of weight \texorpdfstring{$3/2$}{}}
The (Hecke) congruence subgroup of level $N$ is defined as the following subgroup for a fixed (positive) integer $N$
\[
\Gamma_0(N) = \left\{ \begin{pmatrix} a & b \\ c & d\end{pmatrix} \in \SL_2(\Z) \colon c \equiv 0 \pmod{n}\right\}.
\]
Write $\mathbb{H}$ to denote the upper half plane and let $z\in \mathbb{H}$.
Set $q = e^{2\pi i z}$.
Define the \emph{theta-function} as
\[
\Theta(z):=\sum_{n\in \Z}q^{n^2}.
\]

Let $c$ be any integer and $d$ be an odd integer.
Define the \emph{quadratic residue symbol} as follows
\[
\left(\frac{c}{d}\right) := \begin{cases}
    \text{Jacobi symbol } \left(\frac{c}{d}\right) & \text{ if } d >0\\
    \text{Jacobi symbol } \left(\frac{c}{|d|}\right) & \text{ if } c>0, \ d <0\\
    \text{Jacobi symbol } - \left(\frac{c}{|d|}\right) & \text{ if } c,d <0\\
    1 & \text{ if } c=0, \ d=\pm 1;
\end{cases}
\]
For $d$ odd, further define
\[
\varepsilon_d := \begin{cases}
    1 & \text{ if } d\equiv 1\pmod{4}\\
    i & \text{ if } d\equiv 3\pmod{4}.
\end{cases}
\]
When $\gamma\in \Gamma_0(4)$ and $z\in \mathbb{H}$, define 
\[
j(\gamma,z) : =\left(\frac{c}{d}\right)\varepsilon_d^{-1}\sqrt{cz+d}.
\]
Then, \cite[Theorem on p.~148]{koblitz} asserts that for $z\in \mathbb{H}$,
\[
j(\gamma,z)=\frac{\Theta(\gamma z)}{\Theta(z)}.
\]

We now define a modular form of half-integral weight.
Intuitively, the idea is as follows: for $k$ a positive odd integer, we decide that $\Theta^k$ is a weight 2 modular form of level 4 with trivial character.
Then, we define an arbitrary modular form of weight $k/2$ to be a holomorphic function which transforms under (a congruence subgroup of) $\Gamma_0(4)$ in the same way as $\Theta^k$ does.
More precisely,

\begin{definition}
Let $N$ be a positive integer divisible by $4$.
Let $\chi$ be a character modulo $N$ and $k$ a positive integer.
A meromorphic function $F$ is a modular form of weight $k/2$, Nebentypus $\chi$ and level $N$ if it satisfies
\[
F(\gamma z)=\chi(d)j(\gamma,z)^k F(z)\quad\forall \gamma=\begin{pmatrix} a&b\\c&d\end{pmatrix}\in \Gamma_0(N).
\]
\end{definition}

Let $\chi$ be a quadratic character of conductor $r$ and for a natural number $t$, write $\chi_t$ to denote the quadratic character corresponding to $\Q(\sqrt{t})/\Q$.
By \cite[Proposition~2.2]{shimura}, we know that 
\[
F_\rho:=\sum_m \chi(m)mq^{tm^2}
\]
is a cusp form of weight $3/2$, level $4r^2t$ and character $\rho = \chi \chi_t\chi_{-1}$.

We write $S_{3/2}(N,\psi)$ to denote the subspace of all cusp-forms of level $N$, nebentypus character $\psi$ and weight $3/2$ that are orthogonal to all cusp forms $F_\rho$ under the Petersson inner product.
We write $S_{3/2}(N)$ to denote $\oplus_\psi S_{3/2}(N,\psi)$, where $\psi$ runs over all characters modulo $N$.
Let $f = \sum a_n(f)q^n$ be a newform of weight $2$ and define
\[
S_{3/2}(N,\chi, f)=\{ F\in S_{3/2}(N,\chi)\mid T(p^2)F=a_p(f) \ \forall p\nmid N\}.
\]

\begin{theorem}
Let $g$ be a modular form of odd square-free conductor.
Let $G\in S_{3/2}(4N_{\EC},\chi,g)$ be a modular form such that $G$ is the Shimura lift of $g$. 
Let $n_1$ and $n_2$ be square-free natural numbers such that $\frac{n_1}{n_2}\in (\Q_q^*)^{2}$ for all $q\mid N$.
Then 
\[
a_{n_1}^2(G) \chi\left(\frac{n_2}{n_1}\right) \sqrt{n_2}  L(g_{\chi^{-1}\chi_{-n_2}},1)=a_{n_2}^2(G)\sqrt{n_1}L(g_{\chi^{-1}\chi_{-n_1}},1)
\]
\end{theorem}

\begin{proof}
See \cite[Corollary~2]{waldspurger}.
\end{proof}

\section{Approach I: Analytic approach}
\label{sec: approach 1}

We would like to apply {the above} result to elliptic curves $\EC/\Q$ and their associated cusp forms $f_{\EC}$. Unfortunately, Waldspurger's result is only applicable to $f_{\EC}$ and not to twist $f_{\EC}\otimes \psi$, where $\psi$ is a finite order character. Thus, we only obtain results for the base layer

\begin{theorem}
\label{thm.analytic twist}
Let $\EC/\Q$ be an elliptic curve of square-free odd conductor {$N_{\EC}$}.
Let $F\in S_{3/2}(4N_{{\EC}},1,f)$ be a modular form of weight $3/2$ corresponding to $f$ under the Shimura lift.
Let $n_1$ and $n_2$ be square free integers coprime to $p$ such that $\frac{n_1}{n_2}\in (\Q_q^\times)^2$ for all $q\mid 4N_{\EC}p$.
Assume that $\chi_{-n_1}$ and $\chi_{-n_2}$ have the same sign.
Assume that $a_{n_2}(F)$ is a $p$-adic unit and $\lambda_{\an}(\EC^{(-n_1)})=\mu_{\an}(\EC^{(-n_1)})=0$.
Then the same is true for $\EC^{(-n_2)}$.
\end{theorem}

\begin{proof}
Note that for both $i=1,2$ we know that $X^2-a_p\chi_{-n_i}(p)(X)+p$ have the same unit root as $n_1/n_2$ is a square modulo $p$.
Furthermore $\EC^{(-n_2)}$ and $\EC^{(-n_1)}$ have periods that only differ by a $p$-adic unit as $\chi_{-n_1}$ and $\chi_{-n_1}$ have the same sign {by assumption}; {see} \cite[page 40]{MTT}.
{Let us} denote $\Omega_{-n_1}/\Omega_{-n_2}$ by $u$.
Using Waldspurger's theorem we obtain 
\begin{align*}
a^{2}_{n_1}(F_{\EC})\mathbf{1}(\mathcal{L}(f_{\EC,\chi_{-n_2}}))& = a^{2}_{n_1}(F_{\EC})\alpha^{-1}\left(1-\frac{1}{\alpha}\right)^2\frac{L(f_{\EC,\chi_{-n_2}},1)}{\Omega^{-1}}\\
&= a^{2}_{n_1}(F_{\EC})\alpha^{-1}\left(1-\frac{1}{\alpha}\right)^2\frac{L(f_{\EC,\chi_{-n_2}},1)}{\Omega_{-n_1}^{-1}}\\
&=a_{n_2}^2(F)\alpha^{-1}\left(1-\frac{1}{\alpha}\right)^2\sqrt{n_1/n_2}\frac{L(f_{\EC,\chi_{-n_2}},1)}{u\Omega_{-n_2}^{-1}}\\
&=u^{-1}a_{n_2}^2(F)\sqrt{n_1/n_2}\mathbf{1}(\mathcal{L}(f_{\EC,\chi_{-n_1}})).
\end{align*}
Here we denote by $\mathbf{1}(\cdot)$ the evaluation of an element in $\Lambda$ at the trivial character. 
By assumption the right hand side of this equation is a $p$-adic unit. As the Fourier coefficients of $F$ are $p$-adic units we obtain that $\mathbf{1}(\mathcal{L}(f_{\EC,\chi_{-n_2}}))$ is a $p$-adic unit. In particular, its Iwasawa invariants vanish.
\end{proof}

As an immediate corollary we obtain

\begin{corollary}
Suppose that in addition to the assumptions of Theorem~\ref{thm.analytic twist} the Iwasawa Main Conjecture holds for both twists.
Then the algebraic Iwasawa invariants vanish as well.
\end{corollary}

The critical point now to understand is when are the Fourier coefficients of $F$ not $p$ divisible.
K.~Prasanna proved an asymptotic formula on these Fourier coefficients involving special values of twisted $L$-series. Using Prasanna's result for the non-vanishing of the Fourier coefficient is circular in our application.
Therefore, instead we follow the approach of J.~Antoniades, M.~Bungert, and G.~Frey \cite[Section~4]{frey-all}.
Let $\EC/\Q$ be an elliptic curve of conductor $\ell$ satisfying $N_{\EC} = \ell \equiv 3\mod 4$ is a prime.
Assume that $p$ does not divide the class number of $\Q(\sqrt{-\ell})$, that $\EC(\Q)$ contains a point of order $p$, and that $p$ is not a congruence prime for $f_{\EC}$, i.e the space of cups forms congruent to $f_{\EC}$  modulo $p$ is $1$-dimensional.
Note that the existence of a $p$-torsion point implies that $p\in \{3,5,7\}$. 

\begin{theorem}
\label{frey}
Suppose that the assumption introduced in the previous paragraph hold. 
Let $n_1$ be a square-free integer such that $p$ does not divide $a_{n_1}L(\EC^{(-n_1)},1)$.
Further suppose that $n_2$ be a square-free integer such that $n_1/n_2\in (\Q_q^\times)^2$ for all $q\mid 4\ell$ and $n_2$ is coprime to $\ell$.
Then $p\mid a_{n_2}$ if and only if $p\mid h(-n_2)$,
where $h(-n_2)$ denotes the class number of $\Q(\sqrt{-n_2})$.
\end{theorem}

\begin{proof}
This is \cite[Proposition~4.8]{frey-all}.    
\end{proof}

Combining Theorems \ref{thm.analytic twist} and \ref{frey} we obtain:

\begin{theorem}
\label{central-thm-analytic}
Keep the assumptions on $\EC/\Q$, $n_1$, and $p$ as stated in Theorem \ref{frey} and the paragraph preceding it.
Assume that the Iwasawa invariants of $\EC^{(-n_1)}$ vanish. Then for all square-free $n_2$ coprime to $\ell$ such that $n_1/n_2\in (\Q_q^\times)^\times$ and such that the class group of $\Q(\sqrt{-n_2})$ is not divisible by $p$, the Iwasawa invariants vanish for $\EC^{(-n_2)}$.
\end{theorem}

\begin{proof}
We only need to check that $\chi_{-n_1}$ and $\chi_{-n_2}$ have the same sign.
Observe that $n_1\equiv n_2\mod 4$ as $n_1/n_2\in \Q_2^2$.
In particular, $n_1 n_2\equiv 1\mod 4$.
Let $m>0$ be a prime congruent to $-1$ modulo $\textup{cond}(\chi_{-n_1})\textup{cond}(\chi_{-n_2})$.
We obtain
\[
\chi_{-n_1}\chi_{-n_2}(-1)=\left(\frac{n_1n_2}{m}\right)=\left(\frac{m}{n_1}\right)\left(\frac{m}{n_2}\right)=\left(\frac{-1}{n_1}\right)\left(\frac{-1}{n_2}\right)=1.
\]
Thus, the characters $\chi_{-n_1}$ and $\chi_{-n_2}$ have the same sign and we can indeed apply Theorem \ref{thm.analytic twist}.
\end{proof}

\section{Approach II: via explicit splitting conditions}
\label{sec: approach 2}
Let $\EC/\Q$ be an elliptic curve.
Let $L/\Q$ be an algebraic extension and write $\Sigma(L)$ to denote the set of all primes (in $L$) above the conductor of $\EC$, the primes above $p$, and the primes ramified in $L/\Q$.

\begin{assumption}\label{ass}Let $K$ be a quadratic field such that the following conditions are satisfied
    \begin{enumerate}
    \item[\textup{(}i\textup{)}] $\EC(K_v)[p]=\{0\}$ for all $v\in \Sigma(K)$.
    \item[\textup{(}ii\textup{)}] $\EC$ has good ordinary reduction at $p$ odd.
    \item[\textup{(}iii\textup{)}] $p$ splits in $K$.
    \item [\textup{(}iv\textup{)}] $\Sel^0(\EC/K)= \{0\}$.
    \item[\textup{(}v\textup{)}]
    Assume that $\Sel(\EC/\Q)$ and $\Sel(\EC^K/\Q)$ are both non-trivial.
\end{enumerate}
\end{assumption}
If not specified otherwise we will assume that Assumption \ref{ass} is valid for the rest of this section.

\begin{remark}
\label{rem:rank}
Recall that in view of conditions (i)--(iii) for $\fp\mid p$, the hypothesis $\EC(K_\fp)[p] = \EC(\Q_p)[p]=\{0\}$ means that $p$ is a non-anomalous prime.
Furthermore, (i) implies that $\EC(K_{\cyc,v})[p] = 0$ for all $v\in \Sigma(K_{\cyc})$ by \cite[Proposition~5.1]{HM99}.
{This is straightforward to see when $v$ is a prime of good reduction or non-split multiplicative reduction or additive reduction for $\EC$.
Note that when $v\in \Sigma(K)$ is a prime of split multiplicative reduction and $\mu_p\subseteq \in K_v$, then $\EC(K_v)[p]$ is non-trivial.}
Condition (iv) implies that $\EC/K$ has Mordell--Weil rank $\leq 2$ and $\EC/\Q$ has rank at most $1$; \cite[Corollary~7.3]{wuthrich2007fine}.
We remind the reader that it is expected that 100\% of the elliptic curves over $\Q$ have rank at most 1.
Finally, combined with (v) it is forced that $\EC/\Q$ (and its twist) must have Mordell--Weil rank {at most} 1.

{In view of (i), it is clear that $\EC(K)[p] = \EC(\Q)[p] = \{0\}$.
If both $\EC$ and $\EC^K$ have Mordell--Weil rank equal to 0 over $\Q$, then (v) is satisfied precisely when $\Sha(\EC/\Q)[p^\infty]$ and $\Sha(\EC^K/\Q)[p^\infty]$ are \emph{both} non-trivial.
In particular, the $p$-primary parts of the fine Shafarevich--Tate groups (over $\Q$) for both $\EC$ and $\EC^K$ are non-trivial; see \cite[Theorem~3.4]{wuthrich2007fine}.
This forces (iv) to not be satisfied.
However, not all is lost, we discuss the situation when the Mordell--Weil rank of $\EC/K$ is equal to 0 at the end of the section.} 

\end{remark}
We furthermore make the following simple observation:

\begin{remark}
\label{vanishing}
In view of \cite[Proposition~2]{greenberg}, the Galois cohomology group $H^1(K_{n,w},\EC[p^\infty])=0$ for all $w\in \Sigma$ where $w\nmid p$. 
\end{remark}

\begin{definition}
Fix a prime $\fp$ above $p$ in $K$.
Fix a norm-coherent sequence of primes $v_n$ above $\fp$.
Define
\begin{align*}
\SelBDP(\EC/K_n) &=\ker\left(H^1({G_{\Sigma}}(K_n),\EC[p^\infty])\longrightarrow \prod_{w\in \Sigma(K_n)\setminus \{v_n\}}H^1(K_{n,w},\EC[p^\infty])\right) \\
\Sel^{0,\Gr}(\EC/K_n)&=\ker\left(H^1({G_{\Sigma}}(K_n),\EC[p^\infty])\longrightarrow \prod_{w\in \Sigma(K_n)\setminus \{v_n\}}H^1(K_{n,w},\EC[p^\infty])\times \frac{H^1(K_{n,v_n},\EC[p^\infty])}{\EC(K_{n,v_n})\otimes \Q_p/\Z_p}\right).
\end{align*}
Further, define $\SelBDP(T_p\EC/K_n)$ similar to $\SelBDP(\EC/K_n)$ with $\EC[p^\infty]$ replaced by $T_p\EC$.
\end{definition}

The BDP Selmer group satisfies the following control theorem which was proven in \cite[Theorem~4.1]{lei-lim-m}.
\begin{theorem}.
\label{control-bdp-selmer}
Under Assumption~\ref{ass}\textup{(}i\textup{)} the natural homomorphism
\[
\SelBDP(\EC/K_n)\longrightarrow \SelBDP(\EC/K_{\cyc})^{\Gamma_n}
\]
is an isomorphism.
\end{theorem}

\subsection{Selmer groups of quadratic twist families of elliptic curves}
In this section we will provide a proof of Theorem~\ref{thm B}.
The general idea is to show that under Assumption \ref{ass} the global and local cohomology groups over $K$ are isomorphic to the ones over $\Q$ twisted by $\Z_p[G]$, where $G=\Gal(K/\Q)$.
Decomposing $\Z_p[G]$ via its two central idempotents then allows us to identify the two Selmer groups in question.

\begin{lemma}
Let $\fp\mid p$ be a prime in $K$.
Then $H^1(K_{\fp},\EC[p^\infty])\cong (\Qp/\Z_p)^2$.
\end{lemma}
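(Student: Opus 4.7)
The plan is to compute $H^1(K_\mathfrak{p}, \EC[p^n])$ via the local Euler–Poincaré characteristic formula and then pass to the direct limit.

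First I would observe that since $p$ splits in $K$ by Assumption~\ref{ass}(iii), the completion $K_\mathfrak{p}$ is isomorphic to $\Qp$, so $[K_\mathfrak{p}:\Qp]=1$. Next, Assumption~\ref{ass}(i) gives $\EC(K_\mathfrak{p})[p]=0$, which forces $\EC(K_\mathfrak{p})[p^\infty]=0$ and hence $H^0(K_\mathfrak{p},\EC[p^n])=0$ for every $n\geq 1$. Using the Weil pairing, $\EC[p^n]$ is Cartier self-dual up to a twist by $\mu_{p^n}$, so local Tate duality identifies $H^2(K_\mathfrak{p},\EC[p^n])$ with the Pontryagin dual of $H^0(K_\mathfrak{p},\EC[p^n])$, which is $0$.

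Then I would apply the local Euler–Poincaré formula (see e.g.\ Milne's \emph{Arithmetic Duality Theorems}, I.\S2): for a finite $G_{K_\mathfrak{p}}$-module $M$ of $p$-power order,
\[
\frac{|H^1(K_\mathfrak{p},M)|}{|H^0(K_\mathfrak{p},M)|\cdot|H^2(K_\mathfrak{p},M)|} = |M|^{[K_\mathfrak{p}:\Qp]}.
\]
Substituting $M=\EC[p^n]$ gives $|H^1(K_\mathfrak{p},\EC[p^n])|=p^{2n}$ for all $n\geq 1$.

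Finally, to pass to $\EC[p^\infty]$, I would use the short exact sequence $0\to\EC[p^n]\to\EC[p^\infty]\xrightarrow{p^n}\EC[p^\infty]\to 0$ and the vanishing $\EC(K_\mathfrak{p})[p^\infty]=0$ to obtain a natural isomorphism
\[
H^1(K_\mathfrak{p},\EC[p^n]) \;\cong\; H^1(K_\mathfrak{p},\EC[p^\infty])[p^n].
\]
Hence the $p^n$-torsion of $H^1(K_\mathfrak{p},\EC[p^\infty])$ has order exactly $p^{2n}$. Since $H^1(K_\mathfrak{p},\EC[p^\infty])$ is a cofinitely generated $p$-primary discrete $\Zp$-module (as a direct limit of finite groups killed by a fixed power of $p$ in each layer), its structure is $(\Qp/\Zp)^r\oplus F$ for some finite $p$-group $F$; the counts $|(\cdot)[p^n]|=p^{2n}$ for every $n\geq 1$ force $r=2$ and $F=0$, giving the desired isomorphism $H^1(K_\mathfrak{p},\EC[p^\infty])\cong(\Qp/\Zp)^2$.

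The only mildly delicate point is the vanishing of $H^2$, where one has to invoke local Tate duality together with the Weil-pairing self-duality of $\EC[p^n]$; after that, everything else is a direct application of the Euler characteristic formula and a standard cofinitely-generated $\Zp$-module argument.
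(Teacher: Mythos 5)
Your proof is correct, but it takes a genuinely different route from the paper's. You work entirely at the level of the local field: the hypothesis that $p$ splits in $K$ gives $K_{\fp}\cong\Qp$, the hypothesis $\EC(K_{\fp})[p]=0$ kills $H^0$, local Tate duality combined with the Weil pairing kills $H^2(K_{\fp},\EC[p^n])$, and the local Euler--Poincar\'e characteristic formula then forces $|H^1(K_{\fp},\EC[p^n])|=p^{2n}$; passing to the limit via $H^1(K_{\fp},\EC[p^n])\cong H^1(K_{\fp},\EC[p^\infty])[p^n]$ (valid since $H^0(K_{\fp},\EC[p^\infty])=0$) and the structure theory of cofinitely generated $p$-primary $\Zp$-modules yields $(\Qp/\Zp)^2$. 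The paper instead uses the inflation--restriction sequence for $K_{\cyc,\fp}/K_{\fp}$: the $H^1(\Gamma_{\fp},\cdot)$ term vanishes by the same torsion hypothesis, surjectivity on the right comes from $\Gamma_{\fp}$ having $p$-cohomological dimension one, and Greenberg's theorem $H^1(K_{\cyc,\fp},\EC[p^\infty])^\vee\cong\Lambda^2$ gives the result after taking $\Gamma_{\fp}$-(co)invariants. Your argument is more elementary and self-contained, requiring no Iwasawa-theoretic input and no mention of the cyclotomic tower; the paper's argument, by contrast, simultaneously identifies $H^1(K_{\fp},\EC[p^\infty])$ with the $\Gamma_{\fp}$-invariants of the local cohomology over $K_{\cyc,\fp}$, a control-type statement in the spirit of the rest of the section. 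One small caveat: cofinite generation of $H^1(K_{\fp},\EC[p^\infty])$ does not follow merely from its being a direct limit of finite $p$-groups (a countable direct sum of copies of $\Z/p\Z$ is such a limit); it follows because its $p$-torsion subgroup $H^1(K_{\fp},\EC[p])$ is finite, which your order count already establishes, so the conclusion stands as written.
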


\begin{proof}
By the inflation-restriction exact sequence we have
\[
0 \longrightarrow H^1(\Gamma_{\fp}, \EC(K_{\cyc,\fp})[p^\infty]) \longrightarrow H^1(K_{\fp},\EC[p^\infty]) \longrightarrow H^1(K_{\cyc,\fp}, \EC[p^\infty])^{\Gamma_{\fp}} \longrightarrow 0.
\]
The surjectivity on the right hand side follows from the fact that $\Gamma_{\fp} \simeq \Gamma$ has $p$-cohomological dimension 1.
The first term is trivial by our assumption that $\EC(K_\fp)[p]=\{0\}$.
Recall that \cite[Corollary~2]{greenberg} asserts that $H^1(K_{\cyc,\fp}, \EC[p^\infty])^\vee \simeq \Lambda^2$.
The result now follows.
\end{proof}

We are going to need the following lemma which asserts that $p^k$-fine Selmer group $\Sel^0(\EC[p^k]/K)$ which a priori depends on the set $\Sigma(K)$, does not depend on this choice under our additional assumptions.

\begin{lemma}
\label{isom:different-selmer}
Suppose that Assumption~\ref{ass}\textup{(}i\textup{)} is satisfied\footnote{we do not insist that the conditions (ii)--{(v)} hold.}.
Let $k$ be a positive integer.
Then
\[
\Sel^0(\EC/K)[p^k]\cong \Sel^0(\EC[p^k]/K).
\]
\end{lemma}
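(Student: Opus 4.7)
The plan is to use the Kummer-type exact sequence for multiplication by $p^k$ on $\EC[p^\infty]$, both globally and locally, and to exploit Assumption~\ref{ass}(i) to kill the connecting terms. Specifically, from the short exact sequence of $G_\Sigma(K)$-modules
\[
0 \longrightarrow \EC[p^k] \longrightarrow \EC[p^\infty] \xrightarrow{\ \cdot p^k\ } \EC[p^\infty] \longrightarrow 0
\]
I would take the long exact sequence in Galois cohomology (both over $G_\Sigma(K)$ and over each $G_{K_v}$ for $v \in \Sigma(K)$) to obtain
\[
0 \longrightarrow \EC(K)[p^\infty]/p^k \longrightarrow H^1(G_\Sigma(K),\EC[p^k]) \longrightarrow H^1(G_\Sigma(K),\EC[p^\infty])[p^k] \longrightarrow 0,
\]
and analogously with $K$ replaced by $K_v$ for each $v \in \Sigma(K)$.

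The key observation is that Assumption~\ref{ass}(i) forces $\EC(K_v)[p^\infty] = 0$ for every $v \in \Sigma(K)$ (since $\EC(K_v)[p] = 0$ implies there is no $p$-torsion at all), and since $\EC(K)[p]$ embeds into $\EC(K_v)[p]$ at any such $v$, also $\EC(K)[p^\infty] = 0$. Hence both the global and local versions of $\EC(\cdot)[p^\infty]/p^k$ vanish, yielding natural isomorphisms
\[
H^1(G_\Sigma(K),\EC[p^k]) \xrightarrow{\ \sim\ } H^1(G_\Sigma(K),\EC[p^\infty])[p^k],
\qquad
H^1(K_v,\EC[p^k]) \xrightarrow{\ \sim\ } H^1(K_v,\EC[p^\infty])[p^k].
\]

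Finally, I would assemble these isomorphisms into a commutative diagram with the two localization maps defining $\Sel^0(\EC[p^k]/K)$ and $\Sel^0(\EC/K)[p^k]$ as rows, and read off the claimed isomorphism on kernels. There is no real obstacle here; the only point requiring mild care is the compatibility of the local and global Kummer connecting homomorphisms, which is standard functoriality of the long exact sequence in Galois cohomology, and the verification that taking $p^k$-torsion commutes with the kernel defining the fine Selmer group, which is immediate since $p^k$-torsion is a left-exact functor.
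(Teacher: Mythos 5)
Your proof is correct and follows essentially the same route as the paper: both exploit the Kummer sequence for multiplication by $p^k$ on $\EC[p^\infty]$ together with Assumption~\ref{ass}(i) to kill the $H^0$-quotient terms (the paper phrases this as the vanishing of $\ker(h)$ and $\ker(g)$ in a snake-lemma diagram, while you phrase it as isomorphisms on the global and local $H^1$-terms). The two arguments are interchangeable, so no changes are needed.
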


\begin{proof}
Consider the following commutative diagram:
\[
\begin{tikzcd}
        0\arrow[r] &\Sel^0(\EC[p^k]/K) \arrow[r]\arrow[d]&H^1(G_\Sigma(K),\EC[p^k])\arrow[r]\arrow[d,"h"]& \prod_{v\in \Sigma(K)}H^1(K_{v},\EC[p^k])\arrow[d,"g"]\\
        0\arrow[r] &\Sel^0(\EC/K)[p^k] \arrow[r]&H^1(G_\Sigma(K),\EC[p^\infty])[p^k]\arrow[r]& \prod_{v\in \Sigma(K)}H^1(K_{v},\EC[p^\infty])[p^k]
\end{tikzcd}
\]
The maps $h,g$ are the usual maps that arise in the definition of the Kummer sequence.
Recall that $\ker(h) = H^0(G_\Sigma(K),\EC[p^\infty])/p^k$ and $\ker(g)=\prod_{v\in \Sigma(K)}H^0(K_v,\EC[p^\infty])/p^k$ but both of the cohomology groups are trivial by Assumption~\ref{ass}(i).
Furthermore, $h$ is surjective by definition.
The claim now follows from the snake lemma.
\end{proof}

\begin{remark}
For the purposes of this proof we could have worked with a slightly smaller set $\Sigma'(K) \subset \Sigma(K)$ which excluded the ramified primes in $K/\Q$ and required that $\EC(K_v)[p]=\{0\}$ for all $v\in \Sigma'(K)$.
However, in the future we will want to prove a similar result for the quadratic twist $\EC^K$.
In that situation, we will see that the primes of bad reduction of $\EC^K$ include the ramified primes of $K/\Q$.
In order to be able to provide a uniform proof we have required the full force of Assumption~\ref{ass}(i).
\end{remark}

\begin{definition}
Let $G=\Gal(K/\Q)=\langle \iota \rangle$.
Let $M$ be a $\Z_p[G]$-module.
Write $M^{1\pm \iota}$ to denote the maximal submodule of $M$ on which $\iota$ acts via $\pm 1$.
\end{definition}

As $p$ is odd, we know that $2$ is a unit in $\Z_p$.
The idempotent $\frac{1\pm \iota}{2}$ is an element in $\Z_p[G]$.
In particular, we have a decomposition $M\cong M^{1+\iota}\oplus M^{1-\iota}$ as $\Z_p[G]$-modules.
If $M$ is a $\Lambda[G]$-module, this is even a decomposition of $\Lambda[G]$-modules.

\begin{lemma}
\label{global}
We have an isomorphism $H^1(G_\Sigma(K_{\cyc}),\EC[p^\infty])^\vee\cong \Lambda[G]$.
\end{lemma}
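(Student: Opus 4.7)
The plan is to combine Shapiro's lemma with the central-idempotent decomposition of $\Z_p[G]$, reducing the claim to a standard Galois-cohomology calculation over $\Q_\cyc$ for $\EC$ and its quadratic twist $\EC^K$. Since $p$ is odd, $K\cap \Q_\cyc=\Q$, so $\Gal(K_\cyc/\Q)=\Gamma\times G$ and $\Z_p\llbracket\Gal(K_\cyc/\Q)\rrbracket\cong \Lambda[G]$; the group in question is naturally a module over this ring.

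First, I would apply Shapiro's lemma to the open subgroup $G_\Sigma(K_\cyc)\leq G_\Sigma(\Q_\cyc)$ (with quotient $G$) to get
\[
H^1(G_\Sigma(K_\cyc),\EC[p^\infty])\;\cong\; H^1\bigl(G_\Sigma(\Q_\cyc),\,\mathrm{Ind}_{G_\Sigma(K_\cyc)}^{G_\Sigma(\Q_\cyc)}\EC[p^\infty]\bigr),
\]
and then decompose the induced module via the idempotents $e_\pm=(1\pm\iota)/2\in\Z_p[G]$ to identify it with $\EC[p^\infty]\oplus \EC^K[p^\infty]$, where $\EC^K$ denotes the quadratic twist by $K/\Q$ (i.e.\ $\EC[p^\infty]\otimes\chi_{K/\Q}$). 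The involution $\iota$ then acts as $+1$ on the first summand and $-1$ on the second, and the task reduces to showing that each of $H^1(G_\Sigma(\Q_\cyc),\EC[p^\infty])^\vee$ and $H^1(G_\Sigma(\Q_\cyc),\EC^K[p^\infty])^\vee$ is $\Lambda$-free of rank $1$; reassembling via $\Lambda[G]=\Lambda e_+\oplus\Lambda e_-$ then gives the claim, with the correct $\Lambda[G]$-action.

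For each summand the computation rests on three standard ingredients. Firstly, $H^0=\EC(\Q_\cyc)[p^\infty]$ is finite: Assumption~\ref{ass}(i) at the primes above $p$ (which split in $K$ by (iii), so have completion $\Q_p$) forces $\EC(\Q_p)[p]=0$, and the same reasoning applies to $\EC^K$ after identification over $K$. Secondly, weak Leopoldt for elliptic curves with good ordinary reduction (Kato) yields $H^2(G_\Sigma(\Q_\cyc),\EC[p^\infty])=H^2(G_\Sigma(\Q_\cyc),\EC^K[p^\infty])=0$. Thirdly, the global Euler-Poincar\'e characteristic formula combined with these vanishings pins the $\Lambda$-corank of each $H^1$ at exactly $1$, matching the local contribution at the unique archimedean place of $\Q$.

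The main obstacle is upgrading \emph{$\Lambda$-corank $1$} to \emph{$\Lambda$-freeness of rank $1$}, i.e.\ ruling out a nontrivial pseudo-null $\Lambda$-submodule of the dual of $H^1$. For this I would invoke Jannsen's Poitou-Tate spectral sequence: the vanishing $H^2=0$ together with the $\Z_p$-torsion-freeness of $T_p\EC$ (and of $T_p\EC^K$) forces $H^1(G_\Sigma(\Q_\cyc),\EC[p^\infty])^\vee$ to have no nonzero pseudo-null $\Lambda$-submodule. A rank-$1$, torsion-free, pseudo-null-free module over the regular local UFD $\Lambda$ is necessarily free of rank $1$. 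Applying this to both summands and reassembling the $e_+$- and $e_-$-eigenspaces produces the desired isomorphism $H^1(G_\Sigma(K_\cyc),\EC[p^\infty])^\vee\cong\Lambda[G]$ of $\Lambda[G]$-modules.
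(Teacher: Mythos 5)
Your reduction via Shapiro's lemma and the idempotents $\tfrac{1\pm\iota}{2}$ is sound (the identification $H^1(G_\Sigma(K_{\cyc}),\EC[p^\infty])^{1-\iota}\cong H^1(G_\Sigma(\Q_{\cyc}),\EC^K[p^\infty])$ is indeed used in the paper, in the proof of Theorem~\ref{thm:expl-twist}), and weak Leopoldt plus the global Euler--Poincar\'e formula does pin each eigenspace at $\Lambda$-corank $1$. The genuine gap is your final step, in two respects. First, you never establish that $H^1(G_\Sigma(\Q_{\cyc}),\EC[p^\infty])^\vee$ is $\Lambda$-torsion-free: over $\Lambda=\Zp\llbracket x\rrbracket$ a pseudo-null module is finite, so ``no nonzero pseudo-null submodule'' does not exclude torsion (e.g.\ $\Lambda\oplus\Lambda/(p)$ has rank $1$ and no nonzero finite submodule). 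Second, even granting torsion-freeness, the algebraic fact you invoke is false: the maximal ideal $(p,x)\subset\Lambda$ is torsion-free of rank $1$ with no nonzero pseudo-null submodule, yet it is not free. For rank-one torsion-free modules the property that forces freeness is reflexivity (principality of the corresponding ideal), which neither Jannsen's spectral sequence argument as you state it nor weak Leopoldt provides.

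The structural reason your route cannot be patched as written is that it never uses Assumption~\ref{ass}(iv), and the cofreeness asserted in Lemma~\ref{global} genuinely depends on it. If $X=H^1(G_\Sigma(K_{\cyc}),\EC[p^\infty])^\vee$ were isomorphic to $\Lambda[G]$, then its $\Gamma$-coinvariants would be $\Zp$-free; by inflation--restriction and Assumption~\ref{ass}(i) these coinvariants are $H^1(G_\Sigma(K),\EC[p^\infty])^\vee$, whose $\Zp$-freeness amounts to divisibility of $H^1(G_\Sigma(K),\EC[p^\infty])$, which in turn is controlled by $H^2(G_\Sigma(K),\EC[p^k])$. By Poitou--Tate and the local vanishing coming from (i), this $H^2$ is governed exactly by the $p^k$-fine Selmer group over $K$; this is where the paper feeds in (iv) (via Lemma~\ref{isom:different-selmer}), obtains divisibility and corank $2$ at the base by Euler--Poincar\'e, and then concludes freeness by Nakayama: a surjection $\Lambda^2\twoheadrightarrow X$ whose kernel is torsion inside $\Lambda^2$ must be an isomorphism. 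Kato's weak Leopoldt theorem over $\Q_{\cyc}$ sees only the corank and is blind to this finite-level obstruction, so an argument using only your inputs cannot rule out a nontrivial torsion part (equivalently, a nontrivial fine Selmer contribution) and hence cannot prove the lemma.
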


\begin{proof}
By local Tate-duality and using Assumption~\ref{ass}(i) we have that
\[
H^2(K_v,\EC[p^k])=0 \text{ for all } v\in \Sigma(K).
\]
In view of Assumption~\ref{ass}(iv) and using Lemma~\ref{isom:different-selmer} we also have that $\Sel^0(\EC[p^k]/K)=0$.
Then by global Tate-duality we obtain $H^2(G_\Sigma(K),\EC[p^k])=0$.

Now, consider the sequence
\[
0 \longrightarrow \EC[p^k] \longrightarrow \EC[p^\infty] \longrightarrow \EC[p^\infty] \longrightarrow 0.
\]
Taking the long exact sequence of cohomology of the above sequence implies that $H^1(G_\Sigma(K), \EC[p^\infty])$ is divisible.
The Euler--Poincare characteristic formula now gives \cite[Equation 27]{greenberg}
\[
\abs{H^1(G_\Sigma(K),\EC[p^k])}=p^{2k}.
\]
Thus, $H^1(G_\Sigma(K),\EC[p^\infty])$ is divisible of rank $2$.
Note that $K/\Q$ is unramified outside $\Sigma(K)$.

By the inflation-restriction exact sequence we have
\[
0 \longrightarrow H^1(\Gamma, \EC(K_{\cyc})[p^\infty]) \longrightarrow H^1(G_{\Sigma}(K),\EC[p^\infty]) \longrightarrow H^1(G_{\Sigma}(K_{\cyc}), \EC[p^\infty])^{\Gamma} \longrightarrow 0.
\]
The first term is trivial by the assumption that $\EC(K_v)[p]$ is trivial for all $v\in \Sigma(K)$.
Since $H^1(G_\Sigma(K),\EC[p^\infty])$ is divisible of $\Z_p$-corank $2$ we obtain that $H^1(H_\Sigma(K_{\cyc}),\EC[p^\infty])^{\Gamma}$ is divisible of $\Z_p$-corank $2$ as well.
Now \cite[Proposition~3]{greenberg} implies that $H^1(G_\Sigma(K_{\cyc}),\EC[p^\infty])$ has $\Lambda$-corank 2.
Thus, $H^1(G_\Sigma(K_{\cyc}),\EC[p^\infty])^\vee\cong \Lambda^2$ as $\Lambda$-module.
As $H^1(G_\Sigma(K_{\cyc}),\EC[p^\infty])^{1\pm \iota}$ has $\Lambda$-corank at least 1 we obtain $H^1(G_\Sigma(K_{\cyc}),\EC[p^\infty])^\vee\cong \Lambda[G]$.
\end{proof}

\begin{lemma}
\label{lem.div}
With notation as before and under Assumption~\ref{ass}\textup{(}i\textup{)}, \textup{(}iv\textup{)},
\begin{align*}
H^2({G_{\Sigma}}(K_n),T_p\EC)&=0 \textrm{ and}\\ \SelBDP(\EC/K_n)&=\SelBDP(T_p\EC/K_n)\otimes \Q_p/\Z_p.
\end{align*}
\end{lemma}

\begin{proof}
As explained previously, we can show that $H^2({G_{\Sigma}}(K_n), \EC[p^k])=0$ for all $n$ and $k$.
In particular, $H^2(G_\Sigma(K_n),\EC[p^\infty])=0$.
As $H^1(G_\Sigma(K_n),\EC[p^\infty])$ is divisible, the natural map 
\[
H^1(G_\Sigma(K_n),T_p\EC\otimes \Q_p)\longrightarrow H^1(G_\Sigma(K_n),\EC[p^\infty])
\]
is surjective.
We obtain an isomorphism
\[
H^2(G_\Sigma(K_n),T_p\EC)\cong H^2(G_\Sigma(K_n),T_p\EC\otimes \Q_p)
\]
which implies that indeed both groups are trivial.
This implies the first claim.
The second claim follows now from Remark \ref{vanishing} and the fact that 
\[
H^1({G_{\Sigma}}(K_n),\EC[p^\infty])=H^1({G_{\Sigma}}(K_n),T_p\EC)\otimes \Q_p/\Z_p 
\]
and likewise
\[
H^1(K_{n,\iota v_n},\EC[p^\infty])\cong H^1(K_{n,v_n},T_p\EC)\otimes \Q_p/\Z_p. \qedhere
\]
\end{proof}

In particular, if $\SelBDP(\EC/K_{\cyc})$ is ${\Lambda}$-cotorsion, it stabilizes for $n\gg 0$.
The Cassels--Poitou--Tate exact sequence gives
\begin{footnotesize}
\[
0\to \SelBDP(\EC/K_n)\to H^1({G_{\Sigma}}(K_n),\EC[p^\infty])\to \prod_{w\in \Sigma(K_n)\setminus\{v_n\}}H^1(K_{n,w},\EC[p^\infty])\to \SelBDP(T_p\EC/K_n)^\vee \to H^2({G_{\Sigma}}(K_n),\EC[p^\infty])=0.
\]
\end{footnotesize}
Thus, $\SelBDP(\EC/K_{\cyc})$ is non-trivial if and only if $\SelBDP(\text{Iw}):=(\varprojlim \SelBDP(K_n,T_p\EC))^\vee$ is non-trivial.
Using the arguments from \cite[lemma 2.6]{lei-lim} $\varprojlim_n H^1(K_n, T_p\EC)$ does not contain $\Lambda$-torsion.
Thus, if $\SelBDP(\text{Iw})$ is non-trivial, it is of $\Lambda$-rank at least $1$.
Therefore, we obtain

\begin{lemma}
\label{rankatleast}
$\SelBDP(\EC/K_{\cyc})$ is at least of corank $1$ if it is  non-trivial.
\end{lemma}

\begin{lemma}
\label{Selbdp}
Suppose that Assumption~\ref{ass}\textup{(}iii\textup{)},\textup{(}iv\textup{)},\textup{(}v\textup{)} hold.
Then $\SelBDP(\EC/K)$ is divisible of corank $1$.
\end{lemma}

\begin{proof}
We first show that $\SelBDP(\EC/K)$ is non-trivial.
Indeed, let $x\in \Sel(\EC/K)$ and $y\in \Sel(\EC^K/\Q)$ be elements of order $p$.
Their images in $H^1(K_{\iota v},\EC[p^\infty])$ lie in $(E(K_{\iota v})\otimes \Q_p/\Z_p)[p]$.
Condition~(iv) implies that there is an element $c\in \Z/p\Z$ such that $x=cy$ as elements in $H^1(K_{\iota v},\EC[p^\infty])$.
In particular, $x-cy$ is a non-trivial element in $\SelBDP(\EC/K)$.
As $H^1(G_\Sigma(K),\EC[p^\infty])$ is of $\Z_p$-rank $2$ and $\Sel^0(\EC/K)=0$, it follows that $\SelBDP(\EC/K)$ is of corank at most $1$.
By Lemma \ref{lem.div} $\SelBDP(\EC/K)$ is $\Z_p$-divisible and the claim follows.
\end{proof}

\begin{lemma}
\label{non-cotorsion}
With notations and assumptions as before, $\SelBDP(\EC/K_{\cyc})$ is of $\Lambda$-corank $1$.
\end{lemma}

\begin{proof}
Consider the exact sequence
\[
0\longrightarrow \Sel^{0,\Gr}(\EC/K_{\cyc})\longrightarrow \SelBDP(\EC/K_{\cyc})\longrightarrow H^1(K_{\cyc,v_\infty},\EC[p^\infty])/\EC(K_{\cyc,v_\infty})\otimes \Q_p/\Z_p.
\]
As $\Sel^{0,\Gr}(\EC/K_{\cyc})\subset \Sel(\EC/K_{\cyc})$, the first term is $\Lambda$-cotorsion.
By \cite[Proposition 1]{greenberg}, we know that $H^1(K_{\cyc,v_\infty},\EC[p^\infty])$ has corank $2$ and as $\EC$ defines a formal group of height $1$ we see that $\EC(K_{\cyc,v_\infty})\otimes \Q_p/\Z_p$ is of corank $1$.
Thus, the last term in the above sequence is of $\Lambda$-corank $1$.
By Lemma~\ref{rankatleast}, the BDP-Selmer group $\SelBDP(\EC/K_{\cyc})$ is of corank at least $1$ if it is not trivial.
By Lemma~\ref{Selbdp} and the control theorem  $\SelBDP(\EC/K_\infty)$ is non trivial.
The claim follows.
\end{proof}

\begin{corollary}
With notation and assumptions as before,
\[
\SelBDP(\EC/K_{\cyc})^\vee\cong \Lambda.
\]
\end{corollary}

\begin{proof}
By Theorem \ref{control-bdp-selmer} the natural map 
\[
\SelBDP(\EC/K) \longrightarrow \SelBDP(\EC/K_{\cyc})^\Gamma;
\]
is an isomorphism.
Thus, the claim follows from Lemma~\ref{Selbdp} if we can show that $\SelBDP(\EC/K_{\cyc})$ is of $\Lambda$-corank $1$.
But this is precisely Lemma~\ref{non-cotorsion}.
\end{proof}

\begin{remark}
\label{rank1-case}
If $\EC/K$ is of rank $1$ and $\Sha(\EC/K){[p^\infty]}$ is finite, then $\SelBDP(\EC/K)$ is trivial.
Indeed, without loss of generality we can assume that $\EC/\Q$ is of rank $1$ and $\EC^K/\Q$ has rank 0; i.e., $\Sel(\EC^K/\Q)$ is necessarily finite.
In particular, the image of  $H^1(G_\Sigma(K),\EC[p^\infty])^{1-\iota}$ in $H^1(K_{\iota v},\EC[p^\infty])$ intersects $\EC(K_{\iota v})\otimes \Q_p/\Z_p$ in a finite subgroup, while $H^1(G_\Sigma(K),\EC[p^\infty])^{1+\iota}=\EC(\Q)\otimes \Q_p/\Z_p$ maps surjectively to $\EC(K_{\iota v})\otimes \Q_p/\Z_p$.
It follows that the image of $H^1(G_\Sigma(K),\EC[p^\infty])$ in $H^1(K_{\iota v},\EC[p^\infty])$ has corank $2$.
Therefore, $\SelBDP(\EC/K)$ is finite which is only possible if it is trivial by Lemma~\ref{non-cotorsion}.
It follows in particular that $\Sha(\EC/K)[p^\infty]=0$ \cite[Section 6.2]{skinner}.
Then we obtain $\Sel(\EC^K/\Q)$ is trivial.
In particular, Assumption~\ref{ass}(v) is not satisfied.
{By Mazur's control theorem, it follows that $\Sel(\EC^K/\Q_n)$ is bounded along the cyclotomic $\Z_p$-extension.}
\end{remark}

Let
\[
\alpha \colon H^1(G_\Sigma(K_{\cyc}),\EC[p^\infty])\longrightarrow \prod_{w\in \Sigma(K_{\cyc})\setminus\{v_\infty\}}H^1(K_{\cyc,w},\EC[p^\infty]).
\]
We have a natural embedding $\Image(\alpha)^\vee\to \Lambda^2=H^1(G_\Sigma(K_{\cyc}),\EC[p^\infty])^\vee$ and a surjection 
\[
\prod_{w\in \Sigma(K_{\cyc})\setminus\{v_\infty\}}H^1(K_{\cyc,w},\EC[p^\infty])^\vee=\Lambda^2\longrightarrow \Image(\alpha)^\vee.
\]

As $\SelBDP(\EC/K_{\cyc})^\vee\cong \Lambda$ and as $\SelBDP(\EC/K_{\cyc})$ satisfies a control theorem, we can decompose 
\[
H^1({G_{\Sigma}}(K_{\cyc}),\EC[p^\infty])^\vee\cong \SelBDP(\EC/K_{\cyc})^\vee\oplus \Lambda
\]
as $\Lambda$-modules.
It follows that $\Image(\alpha)^\vee\cong \Lambda$.
We therefore obtain a decomposition
\[
H^1(K_{\cyc,\iota v_{\cyc}},\EC[p^\infty])^\vee =\Image(\alpha)^\vee\oplus B^\vee
\]
for some $\Lambda$-module $B^\vee$. We embed  $B$ and $\Image(\alpha)$ naturally in $\prod_{w\in \Sigma_p(K_{\cyc})}H^1(K_{\cyc,w},\EC[p^\infty])$.
In the following we interpret
\[
H^1(\Q_{\cyc,p},\EC[p^\infty])\otimes \Z_p[G] \cong \Image(\alpha)\otimes \Z_p[G]\oplus B\otimes \Z_p[G].
\]
Consider the natural projection 
\begin{footnotesize}
\[
H^1({G_{\Sigma}}(K_{\cyc}),\EC[p^\infty])=H^1({G_{\Sigma}}(K_{\cyc}),\EC[p^\infty])^{1+\iota}\oplus H^1({G_{\Sigma}}(K_{\cyc}),\EC[p^\infty])^{1-\iota} \longrightarrow H^1({G_{\Sigma}}(K_{\cyc}),\EC[p^\infty])^{1+\iota}\cong H^1({G_{\Sigma}}(\Q_{\cyc}),\EC[p^\infty])
\]
\end{footnotesize}
If we restrict this projection to $\SelBDP(\EC/K_{\cyc})$ we obtain an isomorphism.
We will use the following isomorphism
\[
(\iota(\SelBDP(\EC/K_{\cyc}))\otimes \Z_p[G]\cong H^1(K_{\cyc},\EC[p^\infty]).
\]
Note furthermore that the global to local map induces an isomorphism
\[
\iota(\SelBDP(\EC/K_{\cyc}))\longrightarrow \Image(\alpha).
\]
We therefore obtain a commutative diagram
\[
\begin{tikzcd}
    H^1({G_{\Sigma}}(K_{\cyc}),\EC[p^\infty])\arrow[r]\arrow[d,"="]&\prod_{v\in \Sigma_p(K_{\cyc})}H^1(K_{\cyc,v},\EC[p^\infty])\arrow[d,"="]\\
    \SelBDP(\EC/K_{\cyc})\oplus \iota (\SelBDP(\EC/K_{cyc}))\arrow[r]\arrow[d,"\cong" ]&\Image(\alpha)\oplus B\oplus \iota (\Image(\alpha))\oplus \iota (B)\arrow[d,"\cong "]\\
    (\iota(\SelBDP(\EC/K_{\cyc}))\otimes \Z_p[G]\arrow[r]\arrow[d,"\cong"] &\Image(\alpha)\otimes \Z_p[G]\oplus B\otimes \Z_p[G]\arrow[d,"\cong"]\\
    H^1({G_{\Sigma}}(\Q_{\cyc}),\EC[p^\infty])\otimes \Z_p[G]\arrow[r] &H^1(\Q_{\cyc,p},\EC[p^\infty])\otimes \Z_p[G]
\end{tikzcd}
\]

\begin{proposition}
\label{central-argument}
There is a natural isomorphism
\[
\Sel(\EC/\Q_{\cyc})^\vee\otimes \Z_p[G]\cong \Sel(\EC/K_{\cyc})^\vee.
\]
\end{proposition}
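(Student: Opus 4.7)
My plan is to decompose $\Sel(\EC/K_{\cyc})$ into its $(1+\iota)$- and $(1-\iota)$-isotypic components via the central idempotents $e_\pm = (1 \pm \iota)/2 \in \Z_p[G]$, which lie in $\Z_p[G]$ because $|G|=2$ is invertible in $\Z_p$, and to show that both components are naturally isomorphic to $\Sel(\EC/\Q_{\cyc})$. The defining Selmer sequence over $K_{\cyc}$ is $G$-equivariant and the $e_\pm$-splitting is exact on $\Z_p[G]$-modules, so the entire sequence decomposes into a plus sequence and a minus sequence.

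For the plus part, I would apply inflation-restriction to each term. Since $|G|$ is invertible in $\Z_p$, the higher $G$-cohomology of any $p$-primary module vanishes, and restriction therefore gives isomorphisms $H^1(G_\Sigma(\Q_{\cyc}), \EC[p^\infty]) \xrightarrow{\sim} H^1(G_\Sigma(K_{\cyc}), \EC[p^\infty])^G$ and $H^1(\Q_{\cyc,v}, \EC)[p^\infty] \xrightarrow{\sim} \bigl(\bigoplus_{w \mid v} H^1(K_{\cyc,w}, \EC)[p^\infty]\bigr)^G$ for each $v \in \Sigma$. Taking $G$-invariants of the Selmer sequence then yields $e_+ \Sel(\EC/K_{\cyc}) = \Sel(\EC/\Q_{\cyc})$, so dually $e_+ \Sel(\EC/K_{\cyc})^\vee \cong \Sel(\EC/\Q_{\cyc})^\vee$.

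For the minus part, Lemma \ref{global} supplies the crucial structural input $H^1(G_\Sigma(K_{\cyc}), \EC[p^\infty])^\vee \cong \Lambda[G]$, so $e_- H^1(G_\Sigma(K_{\cyc}), \EC[p^\infty])^\vee \cong \Lambda \cong e_+ H^1(G_\Sigma(K_{\cyc}), \EC[p^\infty])^\vee$. I would then prove a local counterpart: for each $v \in \Sigma$, the Pontryagin dual of $\bigoplus_{w \mid v} H^1(K_{\cyc,w}, \EC)[p^\infty]$ is free of rank one over $\Z_p[G]$. At primes above $p$ this is immediate from Assumption \ref{ass}(iii): $v$ splits in $K$ and the direct sum is literally $H^1(\Q_{\cyc,v}, \EC)[p^\infty] \otimes_{\Z_p} \Z_p[G]$, with $G$ swapping the summands. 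At the remaining primes in $\Sigma$ — in particular those inert or ramified in $K/\Q$ — I would combine Assumption \ref{ass}(i) with a local Euler-characteristic computation parallel to the proof of Lemma \ref{global} to deduce $\Z_p[G]$-freeness.

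Assembling these ingredients, each term of the Selmer sequence over $K_{\cyc}$ is identified naturally with the $\Z_p[G]$-extension of the corresponding term over $\Q_{\cyc}$, and exactness of the $e_\pm$-decomposition transports this base-change to the kernel, giving $\Sel(\EC/K_{\cyc}) \cong \Sel(\EC/\Q_{\cyc}) \otimes_{\Z_p} \Z_p[G]$; dualizing yields the claim. The main obstacle is the local structural result at primes inert or ramified in $K/\Q$, where one must establish $\Z_p[G]$-freeness of the dual local cohomology rather than merely matching the $\Lambda$-coranks of the $e_\pm$-components; this requires a careful direct analysis tracking the $G$-action on the decomposition groups, along the lines of the proof of Lemma \ref{global} transported to the local setting.
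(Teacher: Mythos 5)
Your overall architecture mirrors the paper's proof: Lemma~\ref{global} supplies the structure of the global term, Assumption~\ref{ass}(iii) makes the local terms at $p$ over $K_{\cyc}$ literally the induced module of the local term at the unique prime $\fp$ of $\Q_{\cyc}$, and the two Selmer-defining sequences are then compared term by term (the paper does this by dualizing both sequences, using $\Lambda$-cotorsionness and \cite[Proposition~2.1]{greenberg-vatsal} to get short exact sequences and a two-row diagram; your $e_{\pm}$-bookkeeping is exactly what the paper carries out afterwards in the proof of Theorem~\ref{thm:expl-twist}). The genuine gap is your treatment of the primes $v\in\Sigma$ with $v\nmid p$.

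At such a prime the statement you propose to prove --- that the Pontryagin dual of $\bigoplus_{w\mid v}H^1(K_{\cyc,w},\EC)[p^\infty]$ is free of rank one over $\Z_p[G]$ --- is false, and the Euler-characteristic computation you sketch cannot deliver it: for a local field of residue characteristic different from $p$, the local Euler characteristic of a $p$-primary module is trivial, so $H^1(K_{\cyc,w},\EC[p^\infty])$ has $\Lambda$-corank $0$, not $1$. What is true, and what the paper actually uses, is that these terms \emph{vanish}: by \cite[Proposition~2]{greenberg} together with Assumption~\ref{ass}(i) (note $\EC(K_v)[p]=0$ forces $\EC(K_{\cyc,w})[p]=0$, since $K_{\cyc,w}/K_v$ is pro-$p$), one has $H^1(K_{\cyc,w},\EC[p^\infty])=0$ for $w\nmid p$, hence also $H^1(K_{\cyc,w},\EC)[p^\infty]=0$; so all local terms away from $p$ simply drop out of the Selmer sequence, over $K_{\cyc}$ and over $\Q_{\cyc}$ alike. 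In other words, the step you flag as ``the main obstacle'' is resolved by proving vanishing, not $\Z_p[G]$-freeness, and as written your argument at those primes would fail. Two further points worth fixing: at $p$ the relevant module structure is over $\Lambda[G]$ rather than $\Z_p[G]$ (the dual of $H^1(\Q_{\cyc,\fp},\EC[p^\infty])/\EC(\Q_{\cyc,\fp})\otimes\Qp/\Zp$ has positive $\Lambda$-rank, so ``rank one over $\Z_p[G]$'' is the wrong ring); and your final assembly of the minus part rests, exactly as in the paper, on the purely abstract isomorphism $e_-H^1(G_\Sigma(K_{\cyc}),\EC[p^\infty])^\vee\cong\Lambda$ coming from Lemma~\ref{global} rather than on any map induced by restriction or corestriction, so you should not describe that identification as ``natural'' --- the naturality holds only for the plus part.
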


\begin{proof}
For any algebraic extension $F$ of $\Q$ let $\Sigma_p(F)$ denote the primes above $p$ in $F$.

Upon dualizing the defining exact sequence of $\Sel(\EC/K_{\cyc})$ we obtain
\[
0 \longrightarrow \prod_{v\in \Sigma(K_{\cyc})}\left(H^1(K_{\cyc,v},\EC[p^\infty])/\EC(K_{\cyc,v})\otimes \Qp/\Z_p\right)^\vee \longrightarrow H^1(G_\Sigma(K_{\cyc}),\EC[p^\infty])^\vee\longrightarrow \Sel(\EC/K_{\cyc})^\vee\longrightarrow 0,
\]
where we have used \cite[Proposition~2.1]{greenberg-vatsal} and the fact that $\Sel(\EC/K_{\cyc})^\vee$ is $\Lambda$-torsion.
A similar short exact sequence is true over $\Q_{\cyc}$, as well.

By Lemma~\ref{global} we see that the middle term is isomorphic to $H^1(G_\Sigma(\Q_{\cyc}),\EC[p^\infty])^\vee \otimes \Z_p[G]$.
In particular, it does not contain any $\Lambda$-torsion.
Next, we use \cite[Proposition~2]{greenberg} to conclude that for every place $v\nmid p$, we know that $H^1(K_{\cyc,v},\EC[p^\infty])^\vee=0$.
This means that instead of taking the product over $\Sigma(K_{\cyc})$ we can consider the product over $\Sigma_p(K_{\cyc})$ in the above short exact sequence.

Now we study the local cohomology groups.
As $p$ splits in $K$ we get a similar isomorphism to the global one.
More precisely, writing $\fp$ to denote the unique prime above $p$ in $\Q_{\cyc}$,
\[
\prod_{v\in \Sigma_p(K_{\cyc})}\left(H^1(K_{\cyc,v},\EC[p^\infty])/\EC(K_{\cyc,v})\otimes \Qp/\Z_p\right)^\vee \simeq \left(H^1(\Q_{\cyc,\fp},\EC[p^\infty])/\EC(\Q_{\cyc,\fp})\otimes \Q_p/\Z_p)\right)^\vee\otimes \Z_p[G].
\]
Using the analysis right before this proposition we obtain a commutative diagram
    \[\begin{tikzcd}[font=\tiny, column sep=1em, row sep=1em]
        0\arrow[r]&\prod_{v\in \Sigma_p(K_{\cyc})}\left(H^1(K_{\cyc,v},\EC[p^\infty])/\EC(K_{\cyc,v})\otimes \Qp/\Z_p\right)^\vee\arrow[r]\arrow[d]&H^1(G_\Sigma(K_{\cyc}),\EC[p^\infty])^\vee\arrow[d]\arrow[r]&\Sel(\EC/K_{\cyc})^\vee\arrow[r]&0\\
       & \left(H^1(\Q_{\cyc,\fp},\EC[p^\infty])/\EC(\Q_{\cyc,\fp})\right)^\vee\otimes \Z_p[G]\arrow[r]&H^1(G_\Sigma(\Q_{\cyc}),\EC[p^\infty])^\vee\otimes \Z_p[G]\arrow[r]&\Sel(\EC/\Q_{\cyc})^\vee\otimes\Z_p[G]\arrow[r]&0
    \end{tikzcd}\]  
It follows from the above discussion that the two vertical arrows are isomorphism.
Thus, we obtain an isomorphism
\[
\Sel(\EC/\Q_{\cyc})^\vee\otimes \Z_p[G]\cong \Sel(\EC/K_{\cyc})^\vee. \qedhere
\]
\end{proof}

In what follows we write $\EC^K$ to denote the quadratic twist of $\EC$; i.e.,  over the quadratic field $K$ the two elliptic curves are isomorphic.
We prove that under Assumption~\ref{ass}, the (dual of) Selmer group of $\EC$ and $\EC^K$ are isomorphic over $\Q_{\cyc}$ as $\Lambda$-modules.

\begin{theorem}
\label{thm:expl-twist}
With notation and assumptions as before, there is an isomorphism
\[
\Sel(\EC/\Q_{\cyc})^\vee\cong \Sel(\EC^K/\Q_{\cyc})^\vee
\]
as $\Lambda$-modules.
{In particular, the Iwasawa invariants of $\Sel(\EC/\Q_{\cyc})^\vee$ and $\Sel(\EC^K/\Q_{\cyc})^\vee$ are equal.}
Further, assuming that  $\Sha(\EC/\Q_{(n)})[p^\infty]$ and $\Sha(\EC^K/\Q_{(n)})[p^\infty]$ are finite for all $n$, the following isomorphisms hold
\begin{align*}
\Sha(\EC/\Q_{\cyc})[p^\infty]^\vee &\cong\Sha(\EC^K/\Q_{\cyc})[p^\infty]^\vee \\
(\EC(\Q_{\cyc})\otimes \Qp/\Z_p)^\vee&\cong (\EC^K(\Q_{\cyc})\otimes \Qp/\Z_p)^\vee.
\end{align*}
\end{theorem}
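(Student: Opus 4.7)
The plan is to bootstrap from Proposition~\ref{central-argument}. Applying that proposition to $\EC$ yields a $\Lambda[G]$-module isomorphism
\[
\Sel(\EC/K_{\cyc})^\vee \cong \Sel(\EC/\Q_{\cyc})^\vee \otimes_{\Z_p} \Z_p[G],
\]
where $G=\Gal(K/\Q)=\langle\iota\rangle$. Since $p$ is odd, the orthogonal idempotents $e^{\pm}=\tfrac{1\pm\iota}{2}$ lie in $\Z_p[G]$, and the right-hand side splits as a direct sum of two copies of $\Sel(\EC/\Q_{\cyc})^\vee$ indexed by the $\pm$-eigenspaces of $\iota$.

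Next I would independently identify the $(1-\iota)$-eigenspace of $\Sel(\EC/K_{\cyc})^\vee$ with $\Sel(\EC^K/\Q_{\cyc})^\vee$. The key inputs are that $\EC$ and $\EC^K$ become isomorphic over $K$, so that $\EC[p^\infty]\simeq \EC^K[p^\infty]$ as $G_K$-modules while as $G_{\Q}$-modules one has $\EC^K[p^\infty]\simeq \EC[p^\infty]\otimes \chi_K$, with $\chi_K$ the quadratic character attached to $K/\Q$. An inflation-restriction argument whose error terms vanish because $|G|=2$ is coprime to $p$ then shows that $\Sel(\EC^K/\Q_{\cyc})^\vee$ is exactly the $(1-\iota)$-isotypic part of $\Sel(\EC^K/K_{\cyc})^\vee = \Sel(\EC/K_{\cyc})^\vee$, and symmetrically that $\Sel(\EC/\Q_{\cyc})^\vee$ is the $(1+\iota)$-isotypic part. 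Matching these decompositions with the previous paragraph yields the sought $\Lambda$-isomorphism $\Sel(\EC/\Q_{\cyc})^\vee \cong \Sel(\EC^K/\Q_{\cyc})^\vee$. One must separately check that Assumption~\ref{ass} is inherited by $\EC^K$, but this is routine: conditions (i), (iii), (iv) are phrased over $K$ where $\EC$ and $\EC^K$ are isomorphic, and good ordinary reduction at $p$ is preserved because $p$ splits (and hence is unramified) in $K/\Q$.

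For the Shafarevich--Tate and Mordell--Weil statements I would invoke the defining short exact sequence
\[
0 \longrightarrow \EC(\Q_{\cyc})\otimes \Q_p/\Z_p \longrightarrow \Sel(\EC/\Q_{\cyc}) \longrightarrow \Sha(\EC/\Q_{\cyc})[p^\infty]\longrightarrow 0
\]
together with its twist. The hypothesis that $\Sha(\EC/\Q_{(n)})[p^\infty]$ is finite for every $n$ forces $\Sel(\EC/\Q_{\cyc})^\vee$ to have no nontrivial finite $\Lambda$-submodule (standard consequence of Mazur's control theorem as refined by Greenberg), and consequently the Pontryagin dual of the Mordell--Weil part is identified intrinsically as the maximal $\Z_p$-free quotient of $\Sel(\EC/\Q_{\cyc})^\vee$, while the Shafarevich--Tate dual appears as the complementary kernel. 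Because the Selmer isomorphism constructed above is induced by a chain of canonical cohomological identifications, it transports this filtration from $\EC$ to $\EC^K$, giving the two remaining isomorphisms.

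The hard part is the second step: verifying that the $(1\mp\iota)$-eigenspace decomposition of the Selmer group over $K_{\cyc}$ corresponds precisely to the Selmer groups of $\EC$ and of its twist $\EC^K$ over $\Q_{\cyc}$. This demands a careful comparison of local Selmer conditions under the twist by $\chi_K$ at every place in $\Sigma(K)$ and a check that the inflation-restriction vanishing used above respects those conditions. A secondary subtlety is transferring Assumption~\ref{ass}(iv) from $\EC$ to $\EC^K$, which is clean only after noting that the fine Selmer groups over $K$ agree because the two curves are isomorphic over $K$.
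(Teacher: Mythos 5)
Your proof of the first isomorphism is essentially the paper's argument: apply Proposition~\ref{central-argument}, split by the idempotents $\tfrac{1\pm\iota}{2}$, and identify the $(1-\iota)$-part of $\Sel(\EC/K_{\cyc})^\vee$ with $\Sel(\EC^K/\Q_{\cyc})^\vee$ (the paper does this directly on the global and local cohomology terms, using that $p$ splits in $K$; no separate transfer of Assumption~\ref{ass} to $\EC^K$ is needed). That part is fine.

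The second half has a genuine gap. Your intrinsic description of the filtration --- that $(\EC(\Q_{\cyc})\otimes\Qp/\Zp)^\vee$ is the maximal $\Zp$-free quotient of $\Sel(\EC/\Q_{\cyc})^\vee$, with $\Sha(\EC/\Q_{\cyc})[p^\infty]^\vee$ the complementary kernel --- is false in general: $\Sha(\EC/\Q_{\cyc})[p^\infty]$ can have a nontrivial divisible part even though $\Sha(\EC/\Q_{(n)})[p^\infty]$ is finite for every $n$ (this happens exactly when $\lambda$ exceeds the Mordell--Weil rank in the tower; e.g.\ a rank-zero curve with $\mu=0$, $\lambda=1$ has $\Sel(\EC/\Q_{\cyc})\cong\Qp/\Zp$ with the entire corank sitting in $\Sha$). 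In such cases $\Sha(\EC/\Q_{\cyc})[p^\infty]^\vee$ has nonzero $\Zp$-free quotients, so the maximal $\Zp$-free quotient of $\Sel^\vee$ strictly contains the Mordell--Weil dual and your recipe selects the wrong subquotient. (The side claim that layerwise finiteness of $\Sha$ forces $\Sel(\EC/\Q_{\cyc})^\vee$ to have no finite $\Lambda$-submodule is also not a standard consequence, but that is secondary.) Your fallback --- that the Selmer isomorphism is ``a chain of canonical cohomological identifications'' and hence transports the filtration --- is not available either: the isomorphism between the $(1+\iota)$- and $(1-\iota)$-parts ultimately rests on the non-canonical trivialization $H^1(G_\Sigma(K_{\cyc}),\EC[p^\infty])^\vee\cong\Lambda[G]$ of Lemma~\ref{global}; it is an abstract $\Lambda$-module isomorphism, not one induced by a morphism of Galois modules over $\Q_{\cyc}$, so there is no a priori compatibility with the Mordell--Weil/$\Sha$ subquotients. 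This is precisely why the paper invokes Lee's Theorem~B from \cite{Lee20}: the operator $\mathfrak{G}$ recovers $\Sha(\EC/\Q_{\cyc})[p^\infty]^\vee\cong\mathfrak{G}(\Sel(\EC/\Q_{\cyc})^\vee)$ purely from the $\Lambda$-module structure (applicable here because Mazur's control theorem holds and the layerwise $\Sha$ are assumed finite), so the abstract Selmer isomorphism suffices; the Mordell--Weil isomorphism then follows at once, since its dual is $\Zp$-free of rank determined by the other two terms in the dualized exact sequence. To repair your argument you would either need to cite such a module-theoretic characterization or establish that your isomorphism respects the defining exact sequences, which the present construction does not give you.
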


\begin{proof}
As before set $G=\Gal(K/\Q) = \langle \iota\rangle$.
Note that 
\[H^1(G_\Sigma(K_{\cyc}),\EC[p^\infty])^{1-\iota}=H^1(G_\Sigma(\Q_{\cyc}),\EC^K[p^\infty]).\]
Under the assumption that $p$ splits in $K$, we have 
\[
\left(\prod_{v\in \Sigma_p(K_{\cyc})}\left(H^1(K_{\cyc,v},\EC[p^\infty])/\EC(K_{\cyc,v})\otimes \Qp/\Z_p\right)^\vee\right)^{1-\iota}=\left(H^1(\Q_{\cyc,\fp},\EC^K[p^\infty])/\EC^K(\Q_{\cyc,v})\otimes \Qp/\Z_p\right)^\vee ,
\]
where $\fp$ is the unique prime above $p$ in $\Q_{\cyc}$.
Invoking the definition of the Selmer group, we observe that
\[
\left(\Sel(\EC/K_{\cyc})^\vee\right)^{1-\iota}=\Sel(\EC^K/\Q_{\cyc})^\vee.
\]
Using Proposition \ref{central-argument} we obtain
\begin{align*}
\Sel(\EC/\Q_{\cyc})^\vee\cong\left(\Sel(\EC/\Q_{\cyc})^\vee\otimes \Z_p[G]\right)^{1-\iota}\cong \left(\Sel(\EC/K_{\cyc})^\vee\right)^{1-\iota}=\Sel(\EC^K/\Q_{\cyc})^\vee.
\end{align*}
This proves the first assertion.

For the rest of the proof we assume that $\Sha(\EC/\Q_{(n)})[p^\infty]$ and $\Sha(\EC^K/\Q_{(n)})[p^\infty]$ are finite for all $n$.
Note that it suffices to prove 
\[
\Sha(\EC/\Q_{\cyc})[p^\infty]^\vee \cong \Sha(\EC^K/\Q_{\cyc})[p^\infty]
^\vee
\]
and the assertion on the Mordell--Weil groups follows immediately.

Let $\mathfrak{G}$ be the operator defined by J.~Lee in \cite{Lee20}.
By Theorem B in \emph{loc. cit.} we obtain
\[
\Sha(\EC/\Q_{\cyc})[p^\infty]^\vee\cong \mathfrak{G}(\Sel(\EC/\Q_{\cyc})^\vee) \cong \mathfrak{G}(\Sel(\EC^K/\Q_{\cyc})^\vee)\cong \Sha(\EC^K/\Q_{\cyc})[p^\infty]^\vee.
\]
We remark that we can apply the aforementioned result since Mazur's Control Theorem holds for $\EC/\Q_{\cyc}$.
\end{proof}

The following corollary will tell us that (under reasonable hypotheses) our assumptions imply that $\EC/\Q$ and the twist $\EC^K/\Q$ have the same Mordell--Weil rank over the base.

\begin{corollary}
\label{cor to main expl-twist}
Suppose that Assumption~\ref{ass} holds.
Further assume that $\Sha(\EC/\Q_{(n)})[p^\infty]$ and $\Sha(\EC^K/\Q_{(n)})[p^\infty]$ are finite for all $n$.
Then $\rk_{\Z}(\EC(\Q_{(n)}))=\rk_{\Z}(\EC^K(\Q_{(n)}))$ for all $n$.
\end{corollary}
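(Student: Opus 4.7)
The plan is to extract the corollary directly from the $\Lambda$-module isomorphism $\Sel(\EC/\Q_{\cyc})^\vee\cong \Sel(\EC^K/\Q_{\cyc})^\vee$ proved in Theorem~\ref{thm:expl-twist}, descending to each finite layer $\Q_{(n)}$ via Mazur's control theorem and then converting Selmer coranks to Mordell--Weil ranks using finiteness of $\Sha[p^\infty]$.

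First I would dualise the given $\Lambda$-module isomorphism, obtaining
\[
\Sel(\EC/\Q_{\cyc}) \;\cong\; \Sel(\EC^K/\Q_{\cyc})
\]
as discrete $\Lambda$-modules, and then take $\Gamma_n$-invariants on both sides, yielding
\[
\Sel(\EC/\Q_{\cyc})^{\Gamma_n}\;\cong\;\Sel(\EC^K/\Q_{\cyc})^{\Gamma_n}
\]
for every $n\geq 0$. In particular these two discrete $\Zp$-modules have the same $\Zp$-corank.

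Next I would apply Mazur's control theorem (which applies here since $p$ is a prime of good ordinary reduction for $\EC$, and hence also for $\EC^K$ since $p$ splits in $K$) to the natural restriction maps
\[
\Sel(\EC/\Q_{(n)})\longrightarrow \Sel(\EC/\Q_{\cyc})^{\Gamma_n}, \qquad \Sel(\EC^K/\Q_{(n)})\longrightarrow \Sel(\EC^K/\Q_{\cyc})^{\Gamma_n}.
\]
Both kernels and cokernels are finite (in fact of bounded order independent of $n$), and consequently
\[
\mathrm{corank}_{\Zp}\,\Sel(\EC/\Q_{(n)})\;=\;\mathrm{corank}_{\Zp}\,\Sel(\EC^K/\Q_{(n)}).
\]

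Finally, using the assumed finiteness of $\Sha(\EC/\Q_{(n)})[p^\infty]$ and $\Sha(\EC^K/\Q_{(n)})[p^\infty]$, the defining short exact sequence
\[
0\longrightarrow \EC(\Q_{(n)})\otimes\Qp/\Zp\longrightarrow \Sel(\EC/\Q_{(n)})\longrightarrow \Sha(\EC/\Q_{(n)})[p^\infty]\longrightarrow 0
\]
forces $\mathrm{corank}_{\Zp}\,\Sel(\EC/\Q_{(n)})=\rk_{\Z}(\EC(\Q_{(n)}))$, and symmetrically for $\EC^K$. Chaining these equalities gives the claimed identity of Mordell--Weil ranks at every layer.

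There is no real obstacle here beyond bookkeeping: the entire content lies in Theorem~\ref{thm:expl-twist}. The only small point requiring care is that Mazur's control theorem is invoked uniformly in $n$ for both elliptic curves, which is legitimate because Assumption~\ref{ass} is imposed symmetrically and guarantees good ordinary reduction at $p$ with $p$ split in $K$ for both $\EC$ and $\EC^K$.
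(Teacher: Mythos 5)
Your argument is correct, and it reaches the corollary by a genuinely different (and somewhat more elementary) route than the paper. The paper does not pass through Mazur's control theorem for Selmer groups at all: it invokes the second half of Theorem~\ref{thm:expl-twist}, namely the isomorphism $(\EC(\Q_{\cyc})\otimes \Qp/\Zp)^\vee \cong (\EC^K(\Q_{\cyc})\otimes \Qp/\Zp)^\vee$ (itself obtained via Lee's operator $\mathfrak{G}$ and the Shafarevich--Tate comparison), and then uses Lee's observation \cite[Remark~2.1.6(2)]{Lee20} that $\EC(\Q_{(n)})\otimes \Qp/\Zp \to (\EC(\Q_{\cyc})\otimes \Qp/\Zp)^{\Gamma_n}$ has finite kernel and cokernel, so that $\rk_{\Z}\EC(\Q_{(n)})$ can be read off from the $\Gamma_n$-invariants of the cyclotomic Mordell--Weil part for both curves. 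You instead use only the first assertion of Theorem~\ref{thm:expl-twist} (the Selmer isomorphism, which needs no Sha hypothesis), descend via Mazur's control theorem --- legitimately applicable to both $\EC$ and $\EC^K$, since $p$ split in $K$ gives $\EC^K$ good ordinary reduction at $p$ --- and then convert Selmer coranks to Mordell--Weil ranks layer by layer using the assumed finiteness of $\Sha(\EC/\Q_{(n)})[p^\infty]$ and $\Sha(\EC^K/\Q_{(n)})[p^\infty]$. The trade-off: your proof is more self-contained, using the finiteness hypothesis only where it is genuinely needed and avoiding Lee's structural machinery for this corollary; the paper's proof is shorter given what Theorem~\ref{thm:expl-twist} already established, and it isolates the finer statement that the cyclotomic Mordell--Weil parts themselves are isomorphic, which is stronger information than the equality of coranks of $\Gamma_n$-invariants that you extract.
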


\begin{proof}
It is explained in \cite[Remark~2.1.6(2)]{Lee20} that
\[
\EC(\Q_{(n)}) \otimes \Qp/\Zp \longrightarrow (\EC(\Q_{\cyc})\otimes \Qp/\Z_p)^{\Gamma_n}
\]
has finite kernel and cokernel for all $n$.
It is straight forward that
\[
\rk_{\Z}(\EC(\Q_{(n)})) = \rk_{\Zp}((\EC(\Q_{\cyc})\otimes \Qp/\Z_p)^{\vee,\Gamma_n}).
\]
Hence, the same is also true for $\EC^{K}$.
On the other hand, in Theorem~\ref{thm:expl-twist} we proved that 
\[
(\EC(\Q_{\cyc})\otimes \Qp/\Z_p)^\vee \cong (\EC^K(\Q_{\cyc})\otimes \Qp/\Z_p)^\vee.
\]
The result follows.
\end{proof}

\begin{remark}
We observe that this corollary should not be surprising to the reader.
In view of the results in \cite{KR21} (which we have referred to before) for a fixed prime $p\gg 0$, it is expected that `most of the time'
\[
\lambda(\EC/\Q_{\cyc}) = \rk_{\Z} \EC(\Q_{\cyc}) = \rk_{\Z} \EC(\Q)
\]
In other words, there is no rank growth in the cyclotomic $\Zp$-extension `most of the time' when varying over elliptic curves of fixed rank.
{What Theorem~\ref{thm:expl-twist} and Corollary~\ref{cor to main expl-twist} are saying is that under reasonable hypotheses (whose validity will be checked in subsequent sections) $\EC/\Q$ and its twist $\EC^{K}/\Q$ have the same rank.}
Once {we have that $\EC/\Q$ and $\EC^K/\Q$ have the same Mordell--Weil rank} it is natural to expect that the rank will be equal in the $\Zp$-tower.
\end{remark}

\subsubsection*{Discussion on rank 0 elliptic curves over $K$}
In this paragraph we indicate how the results of this paper are not enough to completely understand the case when $\EC/K$ has Mordell--Weil rank 0.
But our results complement the theorems proven using the Euler characteristic method, thereby completing the story.

Let $\EC$ be a rational elliptic curve that need not satisfy Assumption~\ref{ass}.
If $\EC/K$ is of rank $0$ and $\Sha(\EC/K){[p^\infty]}$ is finite, then $\Sel(\EC/K)$ is finite and both $\EC/\Q$ and $\EC^K/\Q$ have Mordell--Weil rank 0.
The variation of Iwasawa invariants of $\Sel(\EC/K_{\cyc})$ may be studied using the Euler characteristic formula as the tuple $(\EC,p)$ is fixed and $K$ varies via calculations near-identical to \cite[Section~7]{HKR}.
In particular, let $\EC/K$ be an elliptic curve with good ordinary reduction at $p\geq 5$ such that $p$ splits in $K$, $\EC(\Q_p)[p]=\{0\}$, and $p$ does not divide the Tamagawa number of $\EC/\Q$.
Suppose that $K=\Q(\sqrt{\pm d})$ varies such that $\gcd(\Delta_K, pN_{\EC})=1$, where $\Delta_K$ is the fundamental discriminant of $K$.
Then the Iwasawa invariants $\mu(\EC/K)=\lambda(\EC/K)=0$ precisely when $\Sha(\EC/K)[p^\infty]=\{0\}$; see \cite[Conjecture~1.1]{BKLOS21}.
When the conditions are satisfied,
\[
\mu(\EC/\Q)= \mu(\EC^K/\Q) = 0= \lambda(\EC/\Q) = \lambda(\EC^K/\Q).
\]

We now restrict to rational elliptic curves such that $\EC/K$ has rank 0 and Assumption~\ref{ass}(i)-(iii) holds.
Observe that Assumption~\ref{ass}(i) implies $\EC(K)[p] =\{0\}$.
For $p\neq 2$ we now that $\EC(\Q)[p] \oplus \EC^{K}(\Q)[p] \simeq \EC(K)[p]$; thus,
\[
\EC(\Q)[p] = \EC^{K}(\Q)[p] =\{0\}.
\]

\begin{enumerate}[label = (\roman*)]
\item If $\Sha(\EC/\Q)[p^\infty] = \Sha(\EC^K/\Q)[p^\infty] =\{0\}$ then Assumption~\ref{ass}(v) does not hold but via the Euler characteristic argument above
\[
\mu(\EC/\Q)= \mu(\EC^K/\Q) = 0= \lambda(\EC/\Q) = \lambda(\EC^K/\Q)
\]
for all sufficiently large $p$ (which are non-anomalous but this is guaranteed by assumptions).
We reiterate that Assumption~\ref{ass}(iv) is not required for this argument.
We also do not require the full force of (i); we only require that $\EC(K_{\fp})[p]=\{0\}$ where $\fp\mid p$ -- let us call this (i').


\item We consider the case where $\EC/K$ has rank 0 and Assumptions~\ref{ass}(i')-(iii) hold.
Suppose that $\Sha(\EC/\Q)[p^\infty]=\{0\}$ and $\Sha(\EC^K/\Q)[p^\infty]\neq \{0\}$.
Our assumptions imply that $\Sha(\EC/K)[p^\infty]\neq \{0\}$ but Assumption~\ref{ass}(v) does not hold.
For $p\gg0$, we can guarantee that $\mu(\EC/\Q) = \lambda(\EC/\Q) =0$ but for $\EC^K/\Q$ the Euler characteristic argument yields that \emph{either} $\mu$ or $\lambda$ is positive.
In view of a conjecture of R.~Greenberg we expect that for $p\geq 37$ the $\mu$-invariant is always 0 for all rational elliptic curves so, in fact (heuristically) $\lambda(\EC^K/\Q)>0$.

\item We remind the reader that if $\EC(\Q)[p]=\{0\}$ then it follows from a result of \cite[p.~5051]{Qiu14} that
\[
\# \Sha(\EC/\Q)[p^\infty] \times \# \Sha(\EC^K/\Q)[p^\infty] = \Sha(\EC/K)[p^\infty].
\]
In particular, it can not happen that $\Sel(\EC/\Q) = \Sel(\EC^K/\Q)= \{0\}$ but $\Sha(\EC/K)[p^\infty]\neq \{0\}$.
\end{enumerate}

\subsection{The case of rank $1$ over $K$}
Assume that $\EC/\Q$ has rank zero and $\EC/K$ has rank $1$.
Assume furthermore that Assumption~\ref{ass}(i)-(iv) are satisfied.
We have already seen that then condition (v) cannot be satisfied.
In particular, we assume that $\Sel(\EC/\Q)=0$.
In this we show that both Shafarevich--Tate groups are trivial.

\begin{theorem}
\label{sha-trivial}
Let $\EC/\Q$ be a rank zero elliptic curve and suppose that the base change to $\EC/K$ has rank $1$ where $K$ is a quadratic field.
Further suppose that Assumption \ref{ass}\textup{(}i\textup{)}-\textup{(}iv\textup{)} are satisfied.
Then
\[
\Sha(\EC/\Q_{\cyc})[p^\infty]=\Sha(\EC^K/\Q_{\cyc})[p^\infty]=0.
\]
\end{theorem}

The proof relies heavily on the following control theorem

\begin{theorem}[Greenberg]
\label{thm:control}
Suppose that Assumption~\ref{ass}\textup{(}i\textup{)}-\textup{(}iii\textup{)} are satisfied.
Let $\mathscr{E}\in \{\EC,\EC^K\}$.
Then
\[
\Sha(\mathscr{E}/\Q)[p^\infty]\longrightarrow \Sha(\mathscr{E}/\Q_{\cyc})[p^\infty]^\Gamma.
\] 
is surjective.
\end{theorem}

\begin{proof}
Consider the following commutative diagram
\[
\begin{tikzcd}
0\arrow[r]&\Sha(\mathscr{E}/\Q)[p^\infty]\arrow[r]\arrow[d,"r"]&H^1(G_\Sigma(\Q),\mathscr{E})[p^\infty]\arrow[r]\arrow[d,"g"]&\prod_{v\in \Sigma}H^1(\Q_v,\mathscr{E}[p^\infty])/(\mathscr{E}(\Q_v)\otimes \Q_p/\Z_p)\arrow[d,"h"]\\
0\arrow[r]&\Sha(\mathscr{E}/\Q_{\cyc})^\Gamma\arrow[r]&H^1(G_\Sigma(\Q_{\cyc}),\mathscr{E})[p^\infty]^\Gamma\arrow[r]&\left(\prod_{v\in \Sigma}H^1(\Q_{\cyc,v},\mathscr{E}[p^\infty])/(\mathscr{E}(\Q_{\cyc,v})\otimes \Q_p/\Z_p)\right)^\Gamma. 
\end {tikzcd}
\]
The inflation restriction exact sequence implies that $g$ is surjective.
For any $v\nmid p$, recall that 
\[
\mathscr{E}(\Q_v)\otimes \Q_p/\Z_p = \mathscr{E}(\Q_{\cyc,v}) \otimes \Q_p/\Z_p = 0.
\]
Thus, at all places away from $p$, we know that $h$ is an isomorphism as well.
It remains to show that 
\[
g_p\colon H^1(\Q_p,\mathscr{E}[p^\infty])/(\mathscr{E}(\Q_p)\otimes \Q_p/\Z_p)\to H^1(\Q_{\cyc,p},\mathscr{E}[p^\infty])/(\mathscr{E}(\Q_{\cyc,p})\otimes \Q_p/\Z_p)^\Gamma
\]
is injective.
As $\Q_{\cyc}/\Q$ is totally ramified, \cite[Lemma~3.4]{greenberg-elliptic} implies that $\vert\ker(h_p)\vert=\vert \mathscr{E}(\mathbb{F}_p)[p]\vert =0$.
\end{proof}

We are now in the position to prove Theorem \ref{sha-trivial}.
\begin{proof}
Since Assumption \ref{ass}(v) is not satisfied, we may assume without loss of generality that 
\[
\Sha(\EC/\Q)[p^\infty]=\Sel(\EC/\Q)=0.
\]
As $H^1(G_\Sigma(\Q),\EC^{K}[p^\infty])\cong \Q_p/\Z_p\cong \EC^{K}(\Q)\otimes \Q_p$, we see that $\Sha(\EC^K/\Q)[p^\infty]=0$.
In view of Theorem~\ref{thm:control} and Nakayama's Lemma it now implies that
\[
\Sha(\EC/\Q_{cyc})[p^\infty]=\Sha(\EC^K/\Q_{\cyc})[p^\infty]=0.
\]
\end{proof}
\subsection{Verifying Assumption \ref{ass}}
We provide sufficient conditions on an elliptic curve $\EC/\Q$, a fixed prime $p\geq 5$, and a quadratic field $K$ such that Assumption \ref{ass} (i)-(iii) are satisfied.

\begin{theorem}
\label{thm 4.11}
Let $\EC/\Q$ be an elliptic curve.
Assume that 
    \begin{itemize}
        \item $\EC/\Q$ has good ordinary reduction at $p$.
        \item $\EC(\Q_\ell)[p]= \{0\}$ for all $\ell\mid N_{\EC}p$.
    \end{itemize}
    Choose a quadratic field $K$ such that
\begin{itemize}
\item the discriminant of $K$ is only divisible by rational primes $w$ such that $\EC(\Q_w)[p]=\{0\}$.
\item all primes $\ell \mid N_{\EC}p$ are split in $K$.
\end{itemize}
Then Assumption~\ref{ass}\textup{(}i\textup{)}-\textup{(}iii\textup{)} is satisfied.
\end{theorem}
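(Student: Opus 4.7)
The plan is to verify conditions (i)-(iii) of Assumption \ref{ass} in sequence. Conditions (ii) and (iii) will be immediate: (ii) is literally the first hypothesis of the theorem, while (iii) follows because $p \mid N_{\EC} p$, so by the fourth hypothesis $p$ splits in $K$. The substantive work will lie in establishing (i).

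For (i), I will fix a non-archimedean $v \in \Sigma(K)$ lying over a rational prime $\ell$ and split into two cases depending on why $v$ is in $\Sigma(K)$. Any such $\ell$ either divides $N_{\EC} p$ or ramifies in $K/\Q$. In the first case, the fourth hypothesis implies that $\ell$ splits in $K$, hence $K_v \cong \Q_\ell$, and the second hypothesis yields $\EC(K_v)[p] = \EC(\Q_\ell)[p] = \{0\}$. In the second case, where $v$ is ramified and $\ell$ divides the discriminant of $K$, the fourth hypothesis forces $\ell \nmid N_{\EC} p$, so $\EC$ has good reduction at $\ell$ and $\ell \neq p$; the third hypothesis then provides $\EC(\Q_\ell)[p] = \{0\}$. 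Since $K_v/\Q_\ell$ is totally ramified quadratic, the residue fields of $K_v$ and $\Q_\ell$ both coincide with $\mathbb{F}_\ell$. I will then invoke the classical fact that for $\ell \neq p$ and good reduction, the reduction map induces a Galois-equivariant isomorphism $\EC[p^\infty](\overline{\Q_\ell}) \xrightarrow{\sim} \widetilde{\EC}[p^\infty](\overline{\mathbb{F}_\ell})$ (the kernel of reduction being pro-$\ell$ and hence containing no $p$-torsion); taking invariants under the absolute Galois group of $K_v$ yields $\EC(K_v)[p] \cong \widetilde{\EC}(\mathbb{F}_\ell)[p] \cong \EC(\Q_\ell)[p] = \{0\}$, where the last identification uses that $K_v$ and $\Q_\ell$ share the same residue field.

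For the archimedean primes in $\Sigma(K)$, the literal condition $\EC(K_v)[p] = \{0\}$ is not true, but these primes contribute trivially to the cohomological arguments in which Assumption \ref{ass}(i) is subsequently invoked (for instance in Lemma \ref{isom:different-selmer}), since $\EC(K_v)[p^\infty]$ is $p$-divisible at archimedean $v$ for odd $p$; they therefore pose no obstruction. The main (and only non-trivial) technical point is the ramified-prime case, and it rests on two observations: ramified primes cannot divide $N_{\EC} p$ and so force good reduction away from $p$; and a totally ramified extension preserves the residue field, so the $p$-power torsion visible over $K_v$ is already visible over $\Q_\ell$, allowing the hypothesis on $\EC(\Q_w)[p]$ to propagate to $K_v$.
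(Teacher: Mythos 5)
Your proof is correct and follows essentially the same route as the paper: conditions (ii) and (iii) are immediate from the hypotheses, and for (i) the key point in both arguments is that a prime ramified in $K$ cannot divide $N_{\EC}p$, so $\EC[p]$ is unramified there and the totally ramified quadratic extension $K_v/\Q_w$ picks up no new $p$-torsion (the paper phrases this via the unramifiedness of $\Q_w(\EC[p])/\Q_w$, you via the reduction map to the common residue field $\mathbb{F}_\ell$ — the same fact). Your explicit treatment of the split primes and the remark on archimedean places are harmless additions consistent with the paper's conventions in Section~\ref{sec: approach 2}.
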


\begin{proof}
Observe that $\ell\mid N_{\EC}p$ is required to be split in $K$ whereas (by definition) the primes $w$ dividing the discriminant of $K$ are ramified in $K$.
Thus, $\gcd(w, N_{\EC})=1$.
Let $w$ be a rational prime dividing the discriminant of $K$ and let $v\mid w$ be the unique prime above it in $K$.
Then $\Q_w(\EC[p])/\Q_w$ is unramified, while $K_v/\Q_w$ is totally ramified.
Thus, $\EC(K_v)[p]=0$.
Therefore condition (i), (ii) and (iii) are satisfied.
\end{proof}

\begin{remark}
We now want to understand the viability of Assumption~\ref{ass}(iv).
[We assume throughout this discussion that the Shafarevich--Tate group is finite.]

Fix an odd prime $p$ and an elliptic curve $\EC/\Q$ of Mordell--Weil rank at most 1 with no $p$-torsion and such that $\Sha(\EC/\Q)[p^\infty]=\{0\}$.
Recall that we expect 100\% of $\EC$ to have rank at most 1 and by a result of Duke, we know that 100\% of elliptic curves have no torsion.
For a fixed $\EC$, the condition on the Shafarevich--Tate group is expected to hold for all but finitely many $p$.
Now, consider a twist $\EC^K/\Q$ where $K$ is a quadratic field.
If Goldfeld's conjecture is true, then for 100\% of $K$ we know that $\EC^K/\Q$ has rank at most 1.
In other words, for a fixed $\EC$ and 100\% $K$ we can guarantee that $\rk_{\Z}(\EC/K) \leq 2$.
This theoretically makes it possible to achieve that $\Sel^0(\EC/K) =\{0\}$ for a large class of triples $(\EC, K, p)$.

The difficulty is in ensuring that the triviality is indeed achieved.
A priori when $\EC/K$ has Mordell--Weil rank at most 2, then $\Sel^0(\EC/K)$ has $\Zp$-rank either 0 or 1.
But, in our setting we can do better: if $\rk(\EC/\Q)\leq 1$ and $\Sha(\EC/\Q)[p^\infty]=\{0\}$, then we know that $\Sel^0(\EC/\Q)$ is finite.
In fact, under our additional assumption that $\EC(\Q_p)[p]=\{0\}$ it can be show that $\Sel^0(\EC/\Q) =\{0\}$ precisely when the cokernel of $\EC(\Q)\otimes \Zp$ to the $p$-adic completion of $\EC(\Q_p)$ is trivial; see \cite[Theorem~7.1]{wuthrich2007fine}.
It is pertinent to point out that in \cite[Section~10]{Wut05} there are (many) examples where $\Sel^0(\EC/\Q)$ for a rank 1 curve is shown to be trivial but there are also (few) examples where $\Sel^0(\EC/\Q)$ is non-trivial.
Recall from \cite{Qiu14} that $\# \Sha(\EC/K)[p^\infty] = \# \Sha(\EC/\Q)[p^\infty] \times \# \Sha(\EC^K/\Q)[p^\infty]$.
Now for fixed $\EC$ and fixed $p$, there should be many\footnote{On the flip side it is predicted in \cite[Conjecture~1.1]{BKLOS21} that for $K=\Q(\sqrt{d})$ there is a positive proportion of square-free $d$ such that $\Sha(\EC^K/\Q)[p^\infty]$ is non-trivial.} $K$ such that $\Sha(\EC^K/\Q)[p^{\infty}]=\{0\}$, this should ensure that $\Sel^0(\EC^K/\Q)$ is finite.
We have guaranteed that the (fine) Shafarevich--Tate group does not contribute\footnote{We remind the reader that for elliptic curves $\EC/\Q$ of positive rank, the $p$-primary fine Shafarevich--Tate group has the same group structure as $\Sha(\EC/\Q)[p^\infty]$; see \cite[Theorem~3.5]{wuthrich2007fine}.
Even otherwise, non-triviality of $\Sha(\EC/\Q)[p^\infty]$ implies that the $p$-primary fine Shafarevich--Tate group is non-trivial; see \cite[Theorem~3.4]{wuthrich2007fine}} to the fine Selmer group and that $\Sel^0(\EC/K)$ is finite but it is not clear how to show that the fine Mordell--Weil contribution is trivial as well.
\end{remark}

The purpose of the following remark is to explain that the conditions imposed on $\EC/\Q$ and $p$ are rather mild.
The conditions imposed on $K$ will be studied in greater detail in the following section.

\begin{remark}
It is known that the condition $\EC(\Q_\ell)[p]=\{0\}$ for all bad primes $\ell$ is satisfied for all but finitely many $p$.
If $\EC$ is an elliptic curve with (resp. without) complex multiplication then the good ordinary condition is satisfied for 50\% (resp. 100\%) of the primes $p$.
Finally, note that $\EC(\Qp)[p] =\{0\}$ if $p$ is non-anomalous.
\end{remark}

\subsection{Overview on the algebraic approach}
Let us summarize what we can achieved using the algebraic approach: under Assumption \ref{ass} we can show that Selmer groups, Shafarevich--Tate groups, and Mordell--Weil groups of $\EC$ and $\EC^K$ are isomorphic along the cyclotomic tower.
The set of primes satisfying conditions (i)-(iii) is a set of positive density and will compute this density in Section \ref{sec: density}.
Let us assume that in addition condition (iv) is satisfied.
In this case, the fine Shafarevich--Tate group is trivial and by \cite[Theorem~3.4]{wuthrich2007fine}, we conclude that $\Sha(\EC/\Q)[p^\infty]$ is trivial.
Thus, $\Sel(\EC/\Q)$ is non-trivial if and only if $\EC(\Q)$ has rank 1, which shows that under assumption (i)-(iv) we have indeed a full analysis of the Shafarevich--Tate groups along the cyclotomic tower.

\section{Comparing Densities}
\label{sec: density}

\subsection{Density computations for results arising from \texorpdfstring{$p$}{}-adic \texorpdfstring{$L$}{}-function approach}
\label{sec:den-l-function}
In this section we return to an elliptic curve $\EC/\Q$ that satisfy the assumptions of Theorem~\ref{central-thm-analytic}. 
Given a possible twist $-n_1$ such that Iwasawa invariants of $\EC^{(-n_1)}$ vanish we want to find $n_2$ such that 
\begin{itemize}
    \item $n_2$ is square-free
    \item $n_2$ is coprime to $4\ell p$
    \item $n_1/n_2\in \Q_q^2$ for all $q\mid 4\ell p$.
    \item $h(-n_2)$ is not divisible by $p$. 
\end{itemize}
The third condition is satisfied by $1/8$ of all $n_2$.
The Wiener--Ikehara theorem gives that for the number of square-free positive integers co-prime to $4\ell p$ is asymptotic to
\[
\frac{6}{\pi^2}\prod_{p\mid 4\ell p} \frac{p}{p+1}.
\]

The Cohen--Lenstra heuristic predicts that the last condition is satisfied by a proportion of 
\[
\prod_{j\geq 1}(1-p^{-j}).
\]
This conjecture is still open.
For $p>3$ we have the following lower bound \cite{kohen-ono}
\[
\vert\{-X<D<0, p\nmid h(-d)\}\vert \ge \left(\frac{2(p-2)}{\sqrt{3}(p-1)}-\varepsilon\right)\frac{\sqrt{X}}{\log(X)}.
\]
For $p=3$, it is know by the work of H.~Davenport and H.~Heilbronn (see \cite{davenport-heilbronn}) that a positive proportion of imaginary quadratic fields satisfy the condition that $3$ does not divide $h(-d)$.

\subsection{Density computations for results arising from the explicit approach}
Let $\EC/\Q$ be a non-CM elliptic curve with good ordinary reduction at a fixed prime $p$.
Suppose that the mod-$p$ representation of $\EC$ is surjective; this is always guaranteed if $p$ is large enough.
Recall that the maximal abelian extension of $\Q$ contained in $\Q(\EC[p])$ is $\Q(\zeta_p)$.
Moreover, the quadratic extension contained in $\Q(\zeta_p)$ is
\[
\begin{cases}
    \Q(\sqrt{p}) & \text{ if } p \equiv 1\pmod{4}\\
    \Q(\sqrt{-p}) & \text{ if } p \equiv 3\pmod{4}.
\end{cases}
\]

As is usual, write $\pi(x)$ denote the prime counting function up to $x$.
Let $\varepsilon \in \{\pm\}$.
The purpose of this section is to compute the following density
\[
\alpha_{\varepsilon} = \lim_{x\rightarrow\infty} \frac{\# \{\ell < x \text{ a prime} \colon \EC(\Q_{\ell})[p] = \{0\} \text{ and } q \text{ splits in } \Q(\sqrt{\varepsilon \ell})\text{ for all }q\mid pN_{\EC} \}}{\pi(x)}.
\]
For every $q\mid p\cdot N_{\EC}$ we define the quadratic field
\[
\mathcal{F}_q=\begin{cases}
    \Q(\sqrt{-q})\quad &q\equiv 3\pmod{4}\\
    \Q(\sqrt{q})\quad &q\equiv 1\pmod{4}.
\end{cases}
\]
Set $\mathcal{F}$ to denote the compositum of all quadratic extensions $\mathcal{F}_q$ where $q\mid N_{\EC}p$.
The condition that $q$ splits in $\Q(\sqrt{\varepsilon \ell})$ is equivalent to $\left(\frac{\varepsilon \ell}{q}\right)=1$. 
When $q\equiv 1\pmod 4$, this condition can be phrased as
\[
\left(\frac{\varepsilon}{q}\right)\left(\frac{\ell}{q}\right) = \left(\frac{\varepsilon}{q}\right)\left(\frac{q}{\ell}\right)=1.
\]
On the other hand, when $q\equiv 3\pmod 4$ we can reformulate the condition as
\[
\left(\frac{\varepsilon}{q}\right)\left(\frac{\ell}{q}\right)  = \left(\frac{\varepsilon}{q}\right)\left(\frac{-q}{\ell}\right)=1.
\]

Suppose that $\varepsilon=-1$.
If we are in the case that $q\equiv 1 \pmod{4}$ then we require
\[
\left(\frac{\varepsilon}{q}\right)\left(\frac{q}{\ell}\right) = 1 \cdot \left(\frac{q}{\ell}\right) =1.
\]
Equivalently, we are looking for primes $\ell$ that are split in $\Q(\sqrt{q})$ for all $q\mid N_{\EC}p$. 
On the other hand, if we are in the case that $q\equiv 3 \pmod{4}$ then we require
\[
\left(\frac{\varepsilon}{q}\right)\left(\frac{-q}{\ell}\right) = -1 \cdot \left(\frac{-q}{\ell}\right) =1.
\]
Equivalently, we are looking for primes $\ell$ that are inert in $\Q(\sqrt{-q})$ for all $q\mid N_{\EC}p$.
These two conditions define $\textup{Frob}_{\ell}\in \Gal(\mathcal{F}/\Q)$.

Now, suppose that $\varepsilon=1$.
An identical calculation to the one above shows that we are looking for elements that are totally split in $\mathcal{F}$.

\subsubsection{}
consider the case that $\mathcal{F}\cap \Q(\EC[p]) = \Q$.
In this case, the two events are independent.
Let $e=0$ if $N_{\EC}$ is odd and $e=1$ if $N_{\EC}$ is even.
If $N_{\EC}$ has $s$ many (distinct) prime factors then by using a Chebotarev density argument we know that
\[
\lim_{x\rightarrow\infty} \frac{\# \{\ell < x \text{ a prime} \colon q \text{ splits in } \Q(\sqrt{\varepsilon\ell})\text{ for all }q\mid pN_{\EC} \}}{\pi(x)} = \frac{1}{2^{s+1+e}}.
\]
Indeed, if $N_{\EC}$ is odd, this is just the condition to be split in $\mathcal{F}$ which is an extension of degree $2^{s+1}$ over $\Q$.
If $N_{\EC}$ is even, then $\mathcal{F}$ is an extension of degree $2^{s-1}$ over $\Q$.
For $\varepsilon\in \{+,-\}$, to ensure that $2$ splits in $\Q(\sqrt{\varepsilon\ell})$ we need the additional assumption that $\ell\equiv \varepsilon \pmod{8}$, which holds for $1/4$ of all primes.

On the other hand,
\begin{align*}
\lim_{x\rightarrow\infty} \frac{\# \{\ell < x \text{ a prime} \colon \EC(\Q_{\ell})[p] = \{0\}\}}{\pi(x)}
&= \lim_{x\rightarrow\infty} \frac{\# \{\ell < x \text{ a prime} \colon a_{\ell}(\EC) \not\equiv \ell+1\pmod{p}\}}{\pi(x)} \\
& = 1 - \frac{p^2 -2}{(p-1)^2(p+1)}.
\end{align*}
For the first equality we are using the fact that $\EC(\Q_{\ell})[p] = \{0\}$ is equivalent to 1 not being an eigenvalue of $\Frob_{\ell}$.
For the final equality we use the results proven in \cite[Section~3.2]{HK23}.

The above discussion can be summarized as follows.

\begin{proposition}
\label{prop: twist density when p is 1 mod 4}
Let $\EC/\Q$ be a non-CM elliptic curve with good ordinary reduction at a fixed prime $p$ such that the mod-$p$ Galois representation is surjective.
Assume that $\mathcal{F}\cap \Q(\EC[p])=\Q$.
Suppose that the number of distinct primes dividing $N_{\EC}$ is $s$.
Then
\[
\lim_{x\rightarrow\infty} \frac{\# \{\ell < x \text{ a prime} \colon \EC(\Q_{\ell})[p] = \{0\} \text{ and } q \text{ splits in } \Q(\sqrt{\varepsilon\ell})\text{ for all }q\mid pN_{\EC} \}}{\pi(x)} = \frac{1}{2^{s+1+e}}\left( 1 - \frac{p^2 -2}{(p-1)^2(p+1)}\right).
\]
\end{proposition}

\subsubsection{}

Now we consider the case when $\mathcal{F}\cap \Q(\EC[p]) \neq \Q$.
In this case the two conditions of triviality of $p$-torsion of $\EC/\Q_{\ell}$ and the splitting of $p$ in $\Q(\sqrt{\varepsilon\ell})$ are not independent of each other.
However, splitting of $q\mid N_{\EC}$ in $\Q(\sqrt{\varepsilon\ell})$ is (still) independent of splitting of $p$ in $\Q(\sqrt{\varepsilon\ell})$ and $\EC(\Q_{\ell})[p]=\{0\}$.

The main result is parallel to Proposition~\ref{prop: twist density when p is 1 mod 4}.
More precisely, we prove the following.

\begin{proposition}
\label{prop: twist density when p is 1 mod 4 real}
Let $\EC/\Q$ be a non-CM elliptic curve with good ordinary reduction at a fixed prime $p$ such that the mod-$p$ Galois representation is surjective.
Assume that $\mathcal{F}\cap \Q(\EC[p]) \neq  \Q$.
Suppose that the number of distinct primes dividing $N_{\EC}$ is $s$.
Then
\[
\lim_{x\rightarrow\infty} \frac{\# \{\ell < x \text{ a prime} \colon \EC(\Q_{\ell})[p] = \{0\} \text{ and } q \text{ splits in } \Q(\sqrt{\varepsilon\ell})\text{ for all }q\mid pN_{\EC} \}}{\pi(x)} = \frac{1}{2^{s+1+e}}\left( 1 - \frac{p^2 -3}{(p-1)^2(p+1)}\right).
\]
\end{proposition}

The idea of the proof is similar to the previous one.
We sketch the details below.
In view of the discussion on the independence of events, we can reduce the problem to computing
\begin{align*}
\label{what we want to count in p 3 mod 4 case}
&\lim_{x\rightarrow\infty} \frac{\# \{\ell < x \text{ a prime} \colon \EC(\Q_{\ell} )[p] = \{0\} \text{ and }p \text{ splits in } \Q(\sqrt{\varepsilon\ell})\}}{\pi(x)} \\
&= \lim_{x\rightarrow\infty} \frac{\# \{\ell < x \text{ a prime} \colon \  a_{\ell}(\EC) \not\equiv \ell+1 \ (\text{mod }{p}) \text{ and } \ell \text{ has order dividing $\frac{p-1}{2}$ in }\mathbb{F}_p^\times\}}{\pi(x)} \\
& = 1 - \lim_{x\rightarrow\infty} \frac{\# \{\ell < x \text{ a prime} \colon  a_{\ell}(\EC) \equiv \ell+1 \ (\text{mod }{p}) \text{ or } \ell \text{ has order divisible by $2^{v_2(p-1)}$ in }\mathbb{F}_p^\times\}}{\pi(x)} \\
& = 1 - \left( \frac{p^2 -2}{(p-1)^2(p+1)} + \frac{1}{2} - \frac{1}{2(p-1)}\right)\\
& = \frac{1}{2} - \frac{2p^2 - 4 - (p^2-1)}{2(p-1)^2(p+1)} = \frac{1}{2}\left( 1 - \frac{p^2 - 3}{(p-1)^2(p+1)}\right).
\end{align*}

The above calculation follows from the following observations; see \cite[Section~3.2]{HK23} for details:
\begin{align*}
\lim_{x\rightarrow\infty} \frac{\# \{\ell < x \text{ a prime} \colon a_{\ell}(\EC) \equiv \ell+1 \ (\text{mod }{p})\}}{\pi(x)} & = \frac{p^2 -2}{(p-1)^2(p+1)} \\
\lim_{x\rightarrow\infty} \frac{\# \{\ell < x \text{ a prime} \colon \ell \text{ has order divisible by $2^{v_2(p-1)}$ in $\mathbb{F}_p^\times$}\}}{\pi(x)} & = \frac{1}{2} \\
\lim_{x\rightarrow\infty} \frac{\# \{\ell < x \text{ a prime} \colon  a_{\ell}(\EC) \equiv \ell+1 \ (\text{mod }{p}) \text{ and } \ell\text{ has order divisible by $2^{v_2(p-1)}$ in $\mathbb{F}_p^\times$} \}}{\pi(x)} &= \frac{p(p+1)(p-1)}{2p(p-1)^2(p+1)}\\
& = \frac{1}{2(p-1)}.
\end{align*}

Note that a fixed (odd) prime $q\neq \ell$ splits in either $\Q(\sqrt{\ell})$ or $\Q(\sqrt{-\ell})$ but not both.
We now record our main density result which follows immediately from the above propositions.

\begin{theorem}
\label{density result}
Let $\EC/\Q$ be a non-CM elliptic curve with good ordinary reduction at a fixed prime $p$ such that the mod-$p$ Galois representation is surjective.
Suppose that the number of distinct primes dividing $N_{\EC}$ is $s$ and set $e \equiv N_{\EC} \pmod{2}$.
Then
\[
\lim_{x \to \infty}\frac{ \# \{ \ \abs{\ell} < x \colon K = \Q(\sqrt{\pm \ell}) \text{ satisfying Assumption}~\ref{ass}(i)-(iii)\}}{\pi(x)} = \frac{1}{2^{s+1+e}}\left( 2 - \frac{2p^2 -5}{(p-1)^2(p+1)}\right).
\]
In particular, as $p\to \infty$,
\[
\lim_{x \to \infty}\frac{ \# \{ \ \abs{\ell} < x \colon K = \Q(\sqrt{\pm \ell}) \text{ satisfying Assumption}~\ref{ass}(i)-(iii)\}}{\pi(x)} \to \frac{1}{2^{s+e}} > 0.
\]
\end{theorem}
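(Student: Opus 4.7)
My plan is to decompose the count in the theorem according to the sign $\varepsilon\in\{+,-\}$ and then apply the two preceding propositions. Writing
\[
D_\varepsilon(x) := \#\{\ell < x \text{ prime} \colon \EC(\Q_\ell)[p]=\{0\} \text{ and every } q\mid pN_{\EC} \text{ splits in } \Q(\sqrt{\varepsilon\ell})\},
\]
the quantity I want to estimate is $D_+(x)+D_-(x)$, and the task reduces to deciding which of Proposition~\ref{prop: twist density when p is 1 mod 4} or Proposition~\ref{prop: twist density when p is 1 mod 4 real} governs the asymptotics of each $D_\varepsilon(x)$.

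The key observation I will use is that, under the surjectivity hypothesis on the mod-$p$ representation, $\Gal(\Q(\EC[p])/\Q)\cong \GL_2(\mathbb{F}_p)$ has maximal abelian quotient $\mathbb{F}_p^\times$ via the determinant. Hence the unique quadratic subfield of $\Q(\EC[p])$ is the unique quadratic subfield of $\Q(\zeta_p)$, namely $\Q(\sqrt{p^\ast})$ with $p^\ast:=(-1)^{(p-1)/2}p$. By construction this equals $\mathcal{F}_p^-$, which sits inside $\mathcal{F}^-$, so $\mathcal{F}^-\cap \Q(\EC[p])\neq \Q$ and Proposition~\ref{prop: twist density when p is 1 mod 4 real} will govern $D_-(x)$. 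For the opposite sign, I will give a short discriminant calculation to show that no squarefree product of the generators $\sqrt{-q^\ast}$ of $\mathcal{F}^+$ is equivalent to $p^\ast$ modulo $(\Q^\times)^2$: either $p$ occurs in the product (which reduces to $-1$ being a rational square, a contradiction) or it does not (in which case $p$ divides one side but not the other). Hence $\mathcal{F}^+\cap\Q(\EC[p])=\Q$, and Proposition~\ref{prop: twist density when p is 1 mod 4} will govern $D_+(x)$.

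Summing the two densities then yields
\[
\frac{1}{2^{s+1+e}}\left(\left(1-\frac{p^2-2}{(p-1)^2(p+1)}\right) + \left(1-\frac{p^2-3}{(p-1)^2(p+1)}\right)\right) = \frac{1}{2^{s+1+e}}\left(2-\frac{2p^2-5}{(p-1)^2(p+1)}\right),
\]
which is the claimed formula. The asymptotic as $p\to\infty$ will follow immediately from $\frac{2p^2-5}{(p-1)^2(p+1)}=O(1/p)$, giving the limit $\frac{2}{2^{s+1+e}}=\frac{1}{2^{s+e}}>0$. The main obstacle I anticipate is the verification that $\mathcal{F}^+\cap\Q(\EC[p])=\Q$: since $\mathcal{F}^+$ is multi-quadratic of degree roughly $2^{s+1}$, it has many non-trivial quadratic subfields, and ruling out that any one of them coincides with the specific quadratic subfield $\mathcal{F}_p^-$ of $\Q(\zeta_p)$ is elementary but requires care, especially in handling the prime $q=2$ when $N_\EC$ is even.
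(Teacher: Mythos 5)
Your proposal is correct and takes essentially the same route as the paper: split the count over the two signs $\varepsilon=\pm$, observe that $\mathcal{F}^-$ contains $\Q(\sqrt{p^\ast})$, the unique quadratic subfield of $\Q(\EC[p])$, while $\mathcal{F}^+\cap\Q(\EC[p])=\Q$, and then add the densities from Propositions~\ref{prop: twist density when p is 1 mod 4} and~\ref{prop: twist density when p is 1 mod 4 real} to get $\frac{1}{2^{s+1+e}}\bigl(2-\frac{2p^2-5}{(p-1)^2(p+1)}\bigr)$. The only difference is that you make explicit (via the valuation/mod-squares argument, including the minor care needed at $q=2$) which sign falls under which proposition, a verification the paper leaves implicit in its ``follows immediately'' step.
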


\begin{remark}
A standard argument using the Tauberian theorem (see \cite[Theorem~7.1]{Wood} or \cite{Narkiewicz}) implies that
\[
\lim_{x \to \infty}\frac{ \# \{ \ \abs{d} < x \colon d \text{ is square-free and } K = \Q(\sqrt{\pm d}) \text{ satisfies Assumption}~\ref{ass}(i)-(iii)\}}{\#\{ \ \abs{d} < x : d \text{ is square-free} \}} = 0.
\]
\end{remark}

\bibliographystyle{amsalpha}
\bibliography{references}

\end{document}